\documentclass[runningheads,envcountsame,a4paper]{llncs} 

\usepackage{amssymb,amsmath}
\usepackage{wasysym}
\usepackage{graphicx}
\usepackage{epsfig}
\usepackage{latexsym}
\usepackage[english]{babel}
\usepackage[utf8]{inputenc}
\usepackage{csquotes}
\usepackage{algorithm} 
\usepackage{algpseudocode}
\usepackage{tikz}
\usetikzlibrary{arrows,shapes.geometric,positioning,calc}
\usepackage[colorinlistoftodos]{todonotes}
\usepackage{changepage}
\usepackage{subfig}
\usepackage{soul}
\usepackage{hyperref}
\usepackage{todonotes}
\usepackage{enumerate}


\newtheorem{thm}{Theorem}
\newtheorem{prop}[thm]{Proposition}
\newtheorem{cor}[thm]{Corollary}
\newtheorem{lem}[thm]{Lemma}

\newtheorem{exa}[thm]{Example}

\newtheorem{rem}[thm]{Remark}

\newsavebox{\qedB}

\sbox{\qedB}{\setlength{\unitlength}{1mm}
 \begin{picture}(4,4)(0,0)
  \thinlines
  {\put(0,0){\framebox(2.83,2.83){}}}%
  {\put(1.17,1.17){\framebox(2.83,2.83){}}}%
  {\put(0,0){\framebox(4,4){}}}%
  {\put(1.17,1.17){{\rule{1ex}{1ex} }}}%
 \end{picture}}
 
\newcommand{\QEDB}{\ifmmode\def\next{\tag"\usebox{\qedB}"}%
 \else\let\next=\relax
 {\unskip\nobreak\hfil\penalty50
 \hskip2em\hbox{}\nobreak\hfil\usebox{\qedB}
 \parfillskip=0pt \finalhyphendemerits=0\penalty-100\bigskip}\fi\next}

\newcommand\mbx[1]{\makebox[10pt]{#1}}
\newcommand{\Alphabet}{\hbox{\rm Alph}}

\newcommand{\fac}{\hbox{\rm Fac}}

\newcommand{\prim}{\hbox{\rm Prim}}

\newcommand{\bprop}{\begin{prop}}
\newcommand{\eprop}{\end{prop}}
\newcommand{\bcor}{\begin{cor}}
\newcommand{\ecor}{\end{cor}}
\newcommand{\blem}{\begin{lem}}
\newcommand{\elem}{\end{lem}}

\title{A note on the Lie complexity and beyond}

\author{Shuo Li\inst{1}}

\institute{Laboratoire de Combinatoire et d'Informatique Mathématique,\\
Université du  Québec \`a Montréal,\\
CP 8888 Succ. Centre-ville, Montréal (QC) Canada H3C 3P8\\
\email{shuo.li.ismin@gmail.com}}

\begin{document}

\maketitle


\begin{abstract}
In a recent paper, Jason P. Bell and Jeffrey Shallit introduced the notion of {\em Lie complexity} and proved that the Lie complexity function of an automatic sequence is automatic. In this note, we give more facts concerning Lie complexity and define the extended of Lie complexity and the prefix Lie complexity. Further, we prove that some proprieties of Lie complexity also hold for the extended Lie complexity. Particularly, we prove that the extended Lie complexity function and the first-order difference sequence of the prefix Lie complexity function of an automatic sequence are both automatic.
\end{abstract}

\section{Introduction}

The study of patterns in a word is one of the fundamental topics in combinatorics on words. To understand the behavior of factors of a special form (e.g., palindromes, bordered,
unbordered, squarefree, repetition-free, k-power, etc.) in a word, people introduced and studied various complexity functions. Among these functions, we can mention cyclic complexity~\cite{CASSAIGNE201736}, arithmetical complexity~\cite{AFF}, abelian complexity~\cite{KARHUMAKI1980155}, Lempel-Ziv complexity~\cite{lemz}, and the factor complexity~\cite{allouche_shallit_2003}[Chapt. 10].

In a recent paper~\cite{BellShallit}, Bell and Shallit introduced the notion of {\em Lie complexity}. Given a finite alphabet $\sum$ and an infinite word $w$ over $\sum$, the Lie complexity function $L_w: \mathbb{N} \to \mathbb{N}$ satisfies that $L_w(n)$ is the number of conjugacy classes (under cyclic shift) of length-$n$ factors $u$ of $w$ with the
property that every element of the conjugacy class appears in $w$. From their initial paper, this notion was motivated by ideas from the theory of Lie algebras. In this article, we propose to review this notion from another perspective: the cycles on the Rauzy graphs of the infinite word $w$. In section~\ref{sec:prel} we recall the basic terminology about words. In section~\ref{sec:rauzy} we recall the notion of Rauzy graph, and give an alternative proof of the following theorem announced in~\cite{BellShallit}:

\begin{thm}[Theorem 1.1 in \cite{BellShallit}]
\label{Lie:1}
For every infinite word $w$ over a finite alphabet and for every positive integer $n$, we have $$L_w(n) \leq C_w(n)-C_w(n-1)+1,$$ where $L_w(n)$ is the value of the Lie complexity function of $w$ at $n$ and $C_w(n)$ is the number of length-$n$ factors of $w$. 
\end{thm}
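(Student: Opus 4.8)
The plan is to work with the Rauzy graph $\Gamma_n$ of $w$, whose vertices are the length-$n$ factors of $w$ and whose edges correspond to the length-$(n+1)$ factors: an edge joins $u$ to $v$ whenever $u = a x$ and $v = x b$ for letters $a,b$ and a word $x$, with the length-$(n+1)$ factor $axb$ labelling that edge. The key observation is that a conjugacy class of a length-$n$ factor $u$ with the property that every cyclic shift of $u$ occurs in $w$ corresponds exactly to a cycle (closed path, up to choice of base point) in $\Gamma_n$ of length $n$; indeed, reading the successive vertices around such a cycle gives all cyclic shifts of the factor, and the hypothesis that all of them occur in $w$ is precisely what guarantees the cycle lives in $\Gamma_n$. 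So $L_w(n)$ equals the number of (vertex-disjoint-representative) cycles of length $n$ in $\Gamma_n$.

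\textbf{Counting cycles by an Euler-characteristic / cyclomatic argument.}

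First I would restrict attention to the subgraph $\Gamma_n'$ of $\Gamma_n$ obtained by discarding vertices and edges that lie on no length-$n$ cycle at all, and analyze it component by component; a cycle of length $n$ in $\Gamma_n$ must be a \emph{simple} cycle (it visits $n$ distinct vertices, one for each cyclic shift), so distinct length-$n$ cycles are, in the relevant sense, edge-disjoint or give rise to independent elements of the cycle space. The central inequality then comes from bounding the number of independent cycles in a graph by its cyclomatic number (first Betti number) $E - V + (\text{number of components})$. Here $V = C_w(n)$ and $E = C_w(n+1)$... but notice the statement involves $C_w(n) - C_w(n-1)$, so in fact the right graph to use is $\Gamma_{n-1}$: its vertices are the $C_w(n-1)$ factors of length $n-1$, its edges are the $C_w(n)$ factors of length $n$, and a length-$n$ factor together with all its cyclic shifts occurring in $w$ corresponds to a closed path of length $n$ in $\Gamma_{n-1}$. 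So I would set $V = C_w(n-1)$, $E = C_w(n)$ in $\Gamma_{n-1}$, bound the number of such cycles by the cyclomatic number $E - V + c$ where $c$ is the number of connected components of the relevant subgraph, and then argue $c \le 1$ on the part of the graph that matters (or absorb the components into a "$+1$"), yielding $L_w(n) \le C_w(n) - C_w(n-1) + 1$.

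\textbf{The main obstacle.}

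The delicate point — and where I expect to spend the most care — is the passage from "number of length-$n$ cycles through the Rauzy graph" to "a set of independent vectors in the cycle space." Two different conjugacy classes could in principle share edges in $\Gamma_{n-1}$, and one must check that they still contribute independently to the first homology, so that their count is genuinely bounded by the cyclomatic number; equivalently, one must show a length-$n$ closed path corresponding to a full conjugacy class cannot be a $\mathbb{Z}$-linear combination of shorter such cycles in a way that collapses the count. I would handle this by exploiting that each such cycle is simple and that its edge set determines the conjugacy class uniquely (the multiset of length-$n$ factors read along it is exactly the conjugacy class), so the incidence vectors of distinct classes are distinct $0/1$ vectors supported on simple cycles; a short linear-algebra lemma then shows $\{$these vectors$\}$ together with a spanning tree's worth of relations forces their number to be at most $E - V + c$. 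Finally I would clean up the "$+1$": restricting to a single connected component that actually contains length-$n$ cycles (bi-special-factor considerations guarantee the structure is nice enough) gives $c = 1$ there and the bound $E - V + 1 = C_w(n) - C_w(n-1) + 1$ on that component, and components contributing no cycle contribute $0$, so the global count still obeys the same bound.
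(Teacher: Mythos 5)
Your proposal follows essentially the same route as the paper: pass to the Rauzy graph $\Gamma_{n-1}(w)$ with $C_w(n-1)$ vertices and $C_w(n)$ edges, represent each conjugacy class fully contained in $\fac(w)$ as a closed path whose edge set is that class, and bound the number of such cycles by the cyclomatic number $E-V+1$ of a weakly connected graph (the paper phrases this as deleting one edge per cycle and noting the graph stays weakly connected, which also disposes of your worry about components). One small correction: the closed path attached to a length-$n$ class $[u]$ has length equal to the primitive period of $u$, not $n$, and visits that many distinct vertices; moreover the delicate independence step you anticipate is automatic, since distinct conjugacy classes are disjoint subsets of $F_w(n)$ and hence give edge-disjoint cycles. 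Neither point affects the bound.
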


We then define the {\em quasi-small circuits} on Rauzy graph and prove the following theorem:

\begin{thm}
\label{bound}
For every infinite word $w$ over a finite alphabet and for every positive integer $n$, we have  $$Qs_w(n) \leq C_w(n+1)-C_w(n)+1,$$

where $Qs_w(n)$ is the number of quasi-small circuits in the $n$-th Rauzy graph of $w$.
\end{thm}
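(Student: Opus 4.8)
The plan is to mirror the structure of the proof of Theorem~\ref{Lie:1}, replacing the $n$-th Rauzy graph (whose vertices are length-$n$ factors) with the edge-centric viewpoint: a quasi-small circuit should be a cycle in $R_n(w)$ that, roughly speaking, uses only edges that must appear in $w$, so that the relevant bookkeeping device is the \emph{line graph} passage $R_n(w)\to R_{n+1}(w)$. First I would recall that $R_{n+1}(w)$ is (a subgraph of) the line graph of $R_n(w)$: vertices of $R_{n+1}(w)$ are length-$(n{+}1)$ factors, i.e.\ edges of $R_n(w)$, and $C_w(n+1)$ counts them. The quantity $C_w(n+1)-C_w(n)$ is then the ``cyclomatic surplus'' between the two consecutive graphs, and the whole point is to show that each quasi-small circuit forces one unit of this surplus, plus a global $+1$ coming from the fact that one spanning structure is free.

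\textbf{Key steps.} (1) State precisely the definition of a quasi-small circuit in $R_n(w)$ and extract the combinatorial feature that makes it ``quasi-small'' — presumably: it is a circuit all of whose vertices have the property that every incident configuration forced by the circuit actually occurs in $w$, so the circuit lifts to $R_{n+1}(w)$. (2) Show that distinct quasi-small circuits lift to ``independent'' cycles in $R_{n+1}(w)$, in the sense that their images are linearly independent in the cycle space $H_1(R_{n+1}(w);\mathbb{Z})$; this is where one uses that each such circuit contributes an edge to $R_{n+1}(w)$ not spanned by the images of the others. (3) Bound the first Betti number (cyclomatic number) of $R_{n+1}(w)$: since $R_{n+1}(w)$ has $C_w(n+1)$ vertices and its edges correspond to length-$(n{+}2)$ factors, but more usefully, compare against $R_n(w)$ directly — $R_n(w)$ is connected (being a Rauzy graph of an infinite word, up to the standard caveat about which vertices are ``recurrent''), has $C_w(n)$ vertices and $C_w(n+1)$ edges, so its cyclomatic number is $C_w(n+1)-C_w(n)+1$. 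Then argue $Qs_w(n)$ is at most this cyclomatic number by exhibiting the quasi-small circuits as independent elements of the cycle space of $R_n(w)$ itself (which is cleaner than passing to $R_{n+1}$).

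\textbf{Reconciling the two formulations.} Before committing, I would decide which graph carries the argument: working entirely in $R_n(w)$ is the natural choice because the inequality's right-hand side $C_w(n+1)-C_w(n)+1$ is exactly $|E(R_n(w))| - |V(R_n(w))| + 1$, the dimension of the cycle space of the connected graph $R_n(w)$. So the crux reduces to: \emph{the quasi-small circuits are $\mathbb{Z}$-linearly independent in $H_1(R_n(w);\mathbb{Z})$.} I would prove this by the ``private edge'' method: assign to each quasi-small circuit an edge of $R_n(w)$ that it traverses but that no other quasi-small circuit traverses (or a more refined unimodularity/triangularity argument if a strict private edge is too much to ask), so that a vanishing integer combination of their cycle vectors must have all coefficients zero.

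\textbf{Expected main obstacle.} The hard part will be Step~(2)/(4): producing the private edge, or more precisely proving the linear independence of the lifted circuits. Small circuits in a Rauzy graph can overlap heavily (share long common sub-paths), so a naive ``each circuit owns an edge'' claim may fail; I expect to need the ``quasi-small'' hypothesis in an essential way here — it should guarantee that each such circuit contains a vertex or edge of out-degree (or in-degree) behaviour that is exclusive to it, i.e.\ a branching point that the circuit ``creates.'' Pinning down exactly which local feature of a quasi-small circuit plays the role of the private edge, and checking it is genuinely private, is the one step I do not expect to be routine; everything else (connectedness of $R_n(w)$ on the recurrent part, the cyclomatic-number formula, the reduction) is standard graph theory once the definitions are in place.
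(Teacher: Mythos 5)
Your reduction is the right skeleton, and it matches the strategy the paper relies on: $\Gamma_n(w)$ is weakly connected with $C_w(n)$ vertices and $C_w(n+1)$ edges, so its cycle space has dimension $C_w(n+1)-C_w(n)+1$, and the theorem follows once the quasi-small circuits are shown to be independent there (equivalently, once one can delete a distinct edge from each while keeping the graph weakly connected, as in the proof of Theorem~\ref{Lie:1}). The problem is that this independence \emph{is} the theorem, and you leave it unproven: you say yourself that pinning down the ``private edge'' of a quasi-small circuit is the one step you do not expect to be routine. Everything you actually establish (weak connectedness, the cyclomatic-number formula, the reduction to independence) is the routine part, so the proposal contains the setup of a proof but not the proof.

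The obstacle you flag is real, not hypothetical. In the paper's own example ($u=abaaabaaaaba$, graph $\Gamma_4(u^{\omega})$) the three quasi-small circuits $C(aab,4)$, $C(aaab,4)$ and $C(aaaab,4)$ have edge sets $[aab]_5$, $[aaab]_5$ and $[aaaab]_5$, and the single edge $aabaa$ belongs to all three; so, unlike the situation in Theorem~\ref{Lie:1} where the relevant closed paths have pairwise disjoint edge sets and one may simply delete one edge from each, here the circuits overlap heavily and a genuine system-of-distinct-representatives or ordering argument is required. The paper itself does not spell this out either --- it defers to Lemmas 7, 8, 10 and 11 of~\cite{Brlekli}, where the independence is proved for small circuits and asserted to carry over to quasi-small ones. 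To complete your argument you would need to reprove or correctly import that material: for instance, use Fine--Wilf-type period arguments on the length-$(n+1)$ words in $[q]_{n+1}$ to show that each quasi-small circuit $C(q,n)$ retains an edge outside the union of all the others, or order the circuits by $|q|$ and prove triangularity of the resulting matrix of cycle vectors. Without that step, nothing beyond the trivial upper bound has been shown.
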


In section~\ref{sec:elc} we define the extended Lie complexity function and the prefix Lie complexity function of an infinite word $w$. We prove that Theorem~\ref{Lie:1} also holds for the extended Lie complexity functions.

In Section~\ref{sec:facts} we prove more facts concerning Lie complexity, two results are announced as follows: 

\begin{thm}
\label{Lie:2}
For every infinite word $w$ over a finite alphabet and for every positive integer $n$, we have $$L_w(n)+L_w(n+1) \leq C_w(n)-C_w(n-1)+1+|\Alphabet(w)|+p(n+1),$$ where $L_w(k)$ is the value of the Lie complexity function of $w$ at $k$, $C_w(k)$ is the number of length-$k$ factors of $w$, $|\Alphabet(w)|$ is the cardinality of the alphabet of $w$ and $p(k)$ is the cardinality of the set\\ $\left\{[p]| [p] \subset \fac(w), |p|=k, \text{$p$ is primitive}\right\}$.
\end{thm}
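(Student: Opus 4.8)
\medskip

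\noindent\textbf{Proof proposal.}
The plan is to push everything into the Rauzy graph $G_{n-1}$ of $w$ — vertices the length\nobreakdash-$(n-1)$ factors of $w$, edges the length\nobreakdash-$n$ factors, with $a_1\cdots a_n$ joining $a_1\cdots a_{n-1}$ to $a_2\cdots a_n$ — and to count there the cycles arising from both $L_w(n)$ and $L_w(n+1)$, using the cycle dictionary of Section~\ref{sec:rauzy}. Recall its content: if $[u]$ is a conjugacy class all of whose elements lie in $\fac(w)$, with primitive root $p$ of length $m$, then the $m$ distinct length\nobreakdash-$(n-1)$ factors of $p^{\omega}$ form a simple cycle $\Gamma_p$ of length $m$ in $G_{n-1}$ whose edge set is exactly the set of conjugates of $u$; conversely every simple cycle of length $m\mid n$ in $G_{n-1}$ is of this form. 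Hence $L_w(n)$ is the number of simple cycles of $G_{n-1}$ whose length divides $n$; write $\mathcal A$ for the length\nobreakdash-$n$ ones (the $p(n)$ classes with a primitive representative) and $\mathcal B$ for the shorter ones, so $L_w(n)=|\mathcal A|+|\mathcal B|$.

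For $L_w(n+1)$ I would first peel off the $p(n+1)$ classes with a primitive representative: these live as length\nobreakdash-$(n+1)$ cycles in $G_n$, not in $G_{n-1}$. Every remaining good class of length $n+1$ is $[q^{j}]$ with $q$ primitive of length $\ell=(n+1)/j\le\tfrac{n+1}{2}$; since all conjugates of $q^{j}$ — hence all factors of $q^{\omega}$ of length $\le n$ — occur in $w$, the cycle $\Gamma_q$ is a genuine simple cycle of length $\ell$ in $G_{n-1}$, and $[q^{j}]\mapsto\Gamma_q$ is injective. The classes with $\ell=1$ (powers of a single letter) contribute at most $|\Alphabet(w)|$, so, letting $\mathcal C$ be the simple cycles of $G_{n-1}$ of length $\ell\mid(n+1)$ with $2\le\ell\le\tfrac{n+1}{2}$, one gets $L_w(n+1)\le p(n+1)+|\Alphabet(w)|+|\mathcal C|$ and therefore
\[
L_w(n)+L_w(n+1)\ \le\ p(n+1)+|\Alphabet(w)|+\bigl(|\mathcal A|+|\mathcal B|+|\mathcal C|\bigr).
\]
Since $w$ is infinite, the successive positions of $w$ trace a walk visiting every vertex, so $G_{n-1}$ is connected and its cycle space over $\mathbb F_2$ has dimension $C_w(n)-C_w(n-1)+1$. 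Thus it suffices to show that the cycles of $\mathcal A\cup\mathcal B\cup\mathcal C$ are linearly independent in that space, which gives $|\mathcal A|+|\mathcal B|+|\mathcal C|\le C_w(n)-C_w(n-1)+1$ and the theorem. (For $n\le 2$ one has $\mathcal C=\emptyset$ and the bound follows from Theorem~\ref{Lie:1} together with $L_w(n+1)\le p(n+1)+|\Alphabet(w)|$.)

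The independence rests on deciding which two of these cycles can share an edge. Two cycles from $\mathcal A\cup\mathcal B$ have, as edge sets, distinct conjugacy classes of length\nobreakdash-$n$ words, so are edge\nobreakdash-disjoint. The edges of a $\mathcal B$\nobreakdash-cycle are proper powers, those of an $\mathcal A$\nobreakdash-cycle are primitive, and those of a $\mathcal C$\nobreakdash-cycle are primitive as well: a length\nobreakdash-$n$ factor of $q^{\omega}$ with period $\ell\mid(n+1)$, $\ell\ge2$, that were imprimitive would have a period dividing both $n$ and $n+1$, forcing it (and hence $q$) to be a power of a single letter — impossible. Hence $\mathcal B$ is edge\nobreakdash-disjoint from $\mathcal A\cup\mathcal C$. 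If two $\mathcal C$\nobreakdash-cycles $\Gamma_{q_1},\Gamma_{q_2}$ shared an edge, that edge would be a length\nobreakdash-$n$ word with periods $\ell_1,\ell_2\le\tfrac{n+1}{2}$; for $\ell_1\ne\ell_2$ one has $\gcd(\ell_1,\ell_2)\mid\gcd(n,n+1)=1$ and Fine and Wilf make the word — hence one of the $q_i$ — imprimitive, while for $\ell_1=\ell_2$ the shared edge pins down $q_1,q_2$ up to conjugacy, so $\Gamma_{q_1}=\Gamma_{q_2}$. Thus the only possible overlaps are $\mathcal A$ with $\mathcal C$. Now suppose a subfamily of $\mathcal A\cup\mathcal B\cup\mathcal C$ sums to $0$ over $\mathbb F_2$. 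Imprimitive edges occur only in $\mathcal B$\nobreakdash-cycles, which are mutually edge\nobreakdash-disjoint and each have such an edge, so the $\mathcal B$\nobreakdash-part is empty; the relation becomes $\bigsqcup_i\{\text{conjugates of }u_i\}=\bigsqcup_j\{\text{length-}n\text{ factors of }q_j^{\omega}\}$. But every length\nobreakdash-$n$ factor of $q_j^{\omega}$ has minimal period exactly $\ell_j\le\tfrac{n+1}{2}<n$ (Fine and Wilf, $q_j$ primitive), whereas every primitive word of length $n$ has an unbordered conjugate — its Lyndon conjugate, which is unbordered since a proper border $\beta$ of a Lyndon word $v$ is a proper prefix, whence $\beta<v$, and a proper suffix, whence $v<\beta$, a contradiction — and an unbordered word of length $n$ has minimal period $n$. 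Hence the $\mathcal A$\nobreakdash-part is empty, and then edge\nobreakdash-disjointness of $\mathcal C$ kills the $\mathcal C$\nobreakdash-part.

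The step I expect to be the main obstacle is exactly this edge\nobreakdash-overlap analysis: arranging the Fine–Wilf estimates so that they hold uniformly for all admissible periods (this is why the threshold $\tfrac{n+1}{2}$ — equivalently, ``$\ell$ a proper divisor of $n+1$'' — is the right one), and noticing that passing to the Lyndon conjugate produces a period\nobreakdash-$n$ representative incompatible with being a short\nobreakdash-period factor of any $q^{\omega}$. Everything else — the cycle dictionary, the injectivity of $[q^{j}]\mapsto\Gamma_q$, the connectivity of $G_{n-1}$, and $\dim H_1(G_{n-1};\mathbb F_2)=C_w(n)-C_w(n-1)+1$ — is routine, and the identity $L_w(n)=\#\{\text{simple cycles of }G_{n-1}\text{ of length dividing }n\}$ already re\nobreakdash-proves Theorem~\ref{Lie:1} as the special case $\mathcal C=\emptyset$.
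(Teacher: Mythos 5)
Your proposal is correct and follows the same overall strategy as the paper: realize the conjugacy classes counted by $L_w(n)$, together with the non-primitive classes counted by $L_w(n+1)$, as circuits in the single Rauzy graph $\Gamma_{n-1}(w)$, peel off the $p(n+1)$ primitive classes of length $n+1$, absorb the double-counted single-letter circuits into the $|\Alphabet(w)|$ term, and bound the number of circuits by $C_w(n)-C_w(n-1)+1$. Where you genuinely diverge is in that last counting step: the paper observes that every circuit in play is quasi-small and invokes Theorem~\ref{bound} (whose proof it in turn delegates to the lemmas of~\cite{Brlekli}), and it handles the overlap by inclusion--exclusion, $|S_n|+|S_{n+1}|=|S_n\cup S_{n+1}|+|S_n\cap S_{n+1}|$ with $|S_n\cap S_{n+1}|\le|\Alphabet(w)|$; you instead discard the length-one roots up front and then re-prove the needed count from scratch by showing that $\mathcal A\cup\mathcal B\cup\mathcal C$ is linearly independent in the $\mathbb F_2$ cycle space of $\Gamma_{n-1}(w)$, using Fine--Wilf for the short-period circuits and the unbordered Lyndon conjugate of a primitive length-$n$ word to separate $\mathcal A$ from $\mathcal C$. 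This buys a self-contained argument at the price of work the blanket bound makes unnecessary: since your $\mathcal A\cup\mathcal B\cup\mathcal C$ is a family of pairwise distinct quasi-small circuits, citing Theorem~\ref{bound} would have finished the proof immediately, with no need for the edge-overlap analysis at all. If you keep the independence route, fix one local slip: the assertion $\gcd(\ell_1,\ell_2)\mid\gcd(n,n+1)=1$ is wrong, since both $\ell_i$ divide $n+1$ and their gcd need not be $1$; the conclusion you actually need --- that a shared length-$n$ edge forces one of the $q_i$ to be imprimitive --- still follows from Fine--Wilf, because $\ell_1+\ell_2\le n+1\le n+\gcd(\ell_1,\ell_2)$ gives that edge period $\gcd(\ell_1,\ell_2)$, which for $\ell_1\ne\ell_2$ properly divides $\max(\ell_1,\ell_2)$ and hence makes the corresponding $q_i$ imprimitive.
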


\begin{thm}
\label{Lie:3}
With the same notation and the same hypothesis on $w$ as above, for every positive integer $n \geq 6$, we have $$L_w(n)+L_w(n+1)+L_w(n+2) \leq C_w(n)-C_w(n-1)+1+3|\Alphabet(w)|+|\Alphabet(w)|^2+p(n+1)+p(n+2),$$ 
\end{thm}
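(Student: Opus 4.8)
The plan is to run the Rauzy-graph mechanism behind Theorems~\ref{Lie:1} and~\ref{Lie:2} one length further, working inside a single graph. Let $G$ be the Rauzy graph $G_{n-1}$ of $w$, with the length-$(n-1)$ factors of $w$ as vertices and the length-$n$ factors as edges, so that $G$ has $C_w(n-1)$ vertices, $C_w(n)$ edges, and cycle rank $C_w(n)-C_w(n-1)+1$. Every conjugacy class $[u]\subseteq\fac(w)$ with $|u|\in\{n,n+1,n+2\}$ is realized by a closed walk in $G$ through the length-$(n-1)$ windows of $u^{\omega}$, whose edge set is exactly the set of length-$n$ factors of $u^{\omega}$; if $u=v^{k}$ with $v$ primitive of length $d\le n$, that edge set consists of $d$ words each of minimal period $d$, hence it pins down the conjugacy class of $v$. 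A Fine--Wilf argument then shows that two such walks, coming from roots $v,v'$ of lengths $d,d'$, can share an edge only when $d=d'$ and $v,v'$ are conjugate, provided $n\ge d+d'-\gcd(d,d')$; for all divisor pairs coming from $n,n+1,n+2$ one has $d,d'\le (n+2)/2\le n-1$, and the hypothesis $n\ge 6$ is there to make this inequality hold with room to spare. Pairwise edge-disjoint cycles are linearly independent in the cycle space of $G$, which is how Theorem~\ref{Lie:1} produces its bound, and both Theorem~\ref{Lie:2} and the present statement come from identifying the rare pairs of cycles that fail to be edge-disjoint.

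First I would re-derive, as in Theorem~\ref{Lie:2}, the splitting $L_w(m)=p(m)+L_w^{\mathrm{np}}(m)$ into the primitive classes and the non-primitive part, together with the fact that the collection consisting of all length-$n$ classes and all non-primitive length-$(n+1)$ classes is edge-disjoint in $G$ apart from the loops $a^{n}$: whenever $a^{n+1}\in\fac(w)$, the loop at the vertex $a^{n-1}$ labelled $a^{n}$ is used by both $[a^{n}]$ and $[a^{n+1}]$. Deleting one member of each coincidence leaves an edge-disjoint subfamily, giving
$$L_w(n)+L_w^{\mathrm{np}}(n+1)\ \le\ C_w(n)-C_w(n-1)+1+|\Alphabet(w)|.$$

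Next I would throw in the non-primitive length-$(n+2)$ classes; each of these contributes a periodic cycle of $G$ whose edges have minimal period $d\mid n+2$ with $d\le (n+2)/2$. By the remark above, such a cycle can meet one already present only through an edge with two equal periods, and an equal period must divide two consecutive integers among $n,n+1,n+2$, hence lies in $\{1,2\}$. The period-$1$ coincidences only enlarge the loop phenomenon --- the loop $a^{n}$ can now be shared by $[a^{n}]$, $[a^{n+1}]$ and $[a^{n+2}]$ at once, costing at most $2|\Alphabet(w)|$ further deletions --- while the period-$2$ coincidences are new: for each primitive word $ab$ with $a\ne b$, the period-$2$ cycle carrying $(ab)^{\omega}$ can serve both $[(ab)^{n/2}]$ and $[(ab)^{(n+2)/2}]$ when $n$ is even, costing at most $|\Alphabet(w)|^{2}$ more. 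What remains is edge-disjoint, so
$$L_w(n)+L_w^{\mathrm{np}}(n+1)+L_w^{\mathrm{np}}(n+2)\ \le\ C_w(n)-C_w(n-1)+1+3|\Alphabet(w)|+|\Alphabet(w)|^{2},$$
and adding $p(n+1)=L_w(n+1)-L_w^{\mathrm{np}}(n+1)$ and $p(n+2)=L_w(n+2)-L_w^{\mathrm{np}}(n+2)$ gives the claimed inequality.

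The main obstacle is the exhaustive bookkeeping: one must check that for $n\ge 6$ there really are no other overlaps among these cycles (no two overlapping with common period $\ge 3$, and no period-$2$ cycle interfering with the length-$n$ or length-$(n+1)$ cycles or with another period-$2$ cycle beyond what is counted), that the three ``buckets'' --- edge-disjoint cycles, classes charged to $p(n+1)$, classes charged to $p(n+2)$ --- are pairwise disjoint so that nothing is double-counted, and that the cyclomatic bound $C_w(n)-C_w(n-1)+1$ is applied correctly when $G_{n-1}$ is disconnected, exactly as in the proof of Theorem~\ref{Lie:1}. The eventually periodic case, where $C_w$ is ultimately constant and the Rauzy graphs stabilise, is probably cleanest to settle separately by a direct computation.
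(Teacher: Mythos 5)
Your proposal is correct and follows essentially the same route as the paper: both realize each conjugacy class as a periodic circuit in $\Gamma_{n-1}(w)$, observe that two such circuits can coincide only when their period divides two of $n,n+1,n+2$ (hence equals $1$ or $2$), which accounts for the $3|\Alphabet(w)|+|\Alphabet(w)|^2$ correction, charge the full-length primitive classes at levels $n+1$ and $n+2$ to $p(n+1)+p(n+2)$, and bound the remaining family of circuits by the cyclomatic number $C_w(n)-C_w(n-1)+1$. The paper phrases this as inclusion--exclusion on the three circuit sets $S_n, S_{n+1}, S_{n+2}$ together with its quasi-small-circuit bound (Theorem~\ref{bound}), while you phrase it as deleting one cycle per coincidence and invoking cycle-space independence, but these are the same argument.
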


In Section~\ref{sec:automatic} we prove that the extended Lie complexity function and the first-order difference sequence of the prefix Lie complexity function of a $k$-automatic sequence are still $k$-automatic.
\section{Preliminaries}
\label{sec:prel}

Let $\sum$ be a finite alphabet and $w$ be either a finite word or an infinite word over $\sum$. A {\em factor} of $w$ is a finite block of contiguous symbols occurring within $w$. Particularly, the empty word is also a factor of $w$. The length of the empty word is $0$ and it will be denoted by $\varepsilon$. For any non-negative integer $n$, let $F_w(n)$ denote the set of all length-$n$ factors of $w$ and let $C_w(n)$ denote the cardinality of $F_w(n)$. The sequence $(C_w(n))_{n \in \mathbb{N}}$ is called the {\em factor complexity sequence} of $w$. We let Fac(w) denote
the collection of all factors of $w$ (including the empty word). 

For any finite (resp. infinite) word $w$, a finite word $a$ is called a {\em prefix} of $w$ if there exists another finite (resp. infinite) word $b$ such that $w$ is the concatenation of $a$ and $b$, i.e. $w=ab$. Similarly, for any finite word $w$, a finite word $a$ is called a {\em suffix} of $w$ if there exists a finite word $b$ such that $w=ba$. 

Let $w$ be an infinite word. For any non-negative integer $n$, let $w_n$ denote the length-$n$ prefix of $w$ and let $w[n]$ denote the $n$-th letter in $w$. Thus, $w=w[1]w[2]\cdots w[n] \cdots$ and, for any non-negative integer $n$, $w_n=w[1]w[2]\cdots w[n]$. For any couple of integers $i,j$ with $i \leq j$, let $w[i..j]$ denote $w[i]w[i+1]\cdots w[j]$.

Let $u$ and $v$ be two finite words, they are called {\em conjugate} when there exist words $x,y$ such that $u=xy$ and $v=yx$.
The conjugacy class of a word $w$ is denoted by $[w]$. By convention, let us define $[\varepsilon]=\left\{\varepsilon\right\}$.

For any natural number $k$, we define the {\em $k$-power} of a finite word $u$ to be the concatenation of $k$ copies of $u$, and it is denoted by $u^k$. Particularly, $u^0=\varepsilon$ for any $u$.
A finite word $w$ is said to be {\em primitive} if it is not a power of another word. Let $\prim(w)$ denote the set of primitive factors of $w$. For any finite word $u$ and any positive rational number $\alpha$, the $\alpha$-power of $u$ is defined to be $u^au_0$ where $u_0$ is a prefix of $u$, $a$ is the integer part of $\alpha$, and $|u^au_0|=\alpha |u|$. The $\alpha$-power of $u$ is denoted by $u^{\alpha}$. We also define the $\omega$-power of $u$ as the word $u^\omega = u u u \cdots$. A finite word $a$ is said to be {\em of the period $b$} if there exists a rational number $\alpha$ satisfying $\alpha \geq 1$ and $a=b^{\alpha}$. 

Let $w$ be a word and let $n$ be a positive integer, we define $[w]_n=\left\{u^{\frac{n}{|u|}}|u \in [w] \right\}$. 

\begin{exa}
Let $u=aba$, then $u^{\frac{5}{3}}=abaab$, $u^{\frac{2}{3}}=ab$ and $[u]=\left\{aab,aba,baa \right\}$. Further, $[u]_1=\left\{v^{\frac{1}{3}}|v \in [u] \right\}=\left\{a,b \right\}$. Similarly, $[u]_2=\left\{aa,ab,ba \right\}$ and $[u]_5=\left\{aabaa,abaab,baaba \right\}$. \qed
\end{exa}

Let $w$ be an infinite word over a finite alphabet, the Lie complexity function $L_w: \mathbb{N} \to \mathbb{N}$ is defined as follows: for any positive integer $n$, $L_w(n)$ is the cardinality of the following set
$$\left\{[p]| [p] \subset \fac(w), |p|=n\right\}.$$
Let us define $CL_w(n)=\left\{[p]|[p] \subset \fac(w), |p|=n\right\}$.

\section{Rauzy graphs and quasi-small circuits}
\label{sec:rauzy}
Let us first recall the notion of Rauzy graph. 
Let $w$ be an infinite word over a finite alphabet $\sum$. For any positive integer $n$, let the Rauzy graph $\Gamma_n(w)$ be an oriented graph whose vertex set is $F_w(n)$ and its edge set is $F_{w}(n+1)$;
an edge $e \in F_{w}(n+1)$ starts at the vertex $u$ and ends at the vertex $v$, if $u$ is a
prefix and $v$ is a suffix of $e$. Let us define $\Gamma(w)=\cup_{n=1}^{\infty}\Gamma_n(w)$.

Let $G$ be an oriented graph and let $V$ and $E$ be respectively the vertex set and the edge set of $G$. $G$ is called {\em weakly connected} if for any couple of different vertices $a,b \in V$, there exists a sequence of vertices $a=v_1,v_2,...v_k=b$ such that for any integer $i$ satisfying $1 \leq i \leq k-1$, there exists an edge $e_i$ either from $v_i$ to $v_{i+1}$ or from $v_{i+1}$ to $v_i$. It is well known that the Rauzy graphs of any words are weakly connected.

Let $\Gamma_l(w)$ be a Rauzy graph of $w$. A sub-graph in $\Gamma_l(w)$ is called a {\em simple closed path} if there are $j$ vertices $v_1,v_2,\dots, v_j$ and $j$ distinct edges $e_1,e_2,\dots,e_j$ for some integer $j$, such that for each $t$ with $1 \leq t \leq j-1$, the edge $e_t$ starts at $v_t$ and ends at $v_{t+1}$, and for the edge $e_j$, it starts at $v_j$ and ends at $v_1$. A simple closed path is called a {\em elementary circuit} if the $j$ vertices in the path are distinct; further, $j$ is called the {\em size} of the circuit. 

\begin{proof}[ of Theorem~\ref{Lie:1}]

This theorem is trivial for $n=1$.

Let us suppose $n \geq 2$. For any class of conjugacy $[p]$ in $CL_w(n)$, there exists a closed simple path in $\Gamma_w(n-1)$ such that its edge set is $[p]$. Further, the edge sets of these closed simple paths are pairwise disjoint. Now let us remove one edge in each of these closed simple paths, we then have a sub-graph graph $\Gamma'_w(n-1)$. This graph is still weakly connected. However, for a weakly connected graph, we have $e-v+1 \geq 0$, where $e$ is the number of edges and $v$ is the number of vertices in the graph. Applying this relation on $\Gamma'_w(n-1)$, we have the number of vertices is $ C_w(n)-L_w(n)$ and the number of edges is $C_w(n-1)$, thus $ C_w(n)-L_w(n)-C_w(n-1)+1 \geq 0$. \qed
\end{proof}

\begin{rem}
We can prove that the simple closed path with the edge set $[p]$ is, in fact, a simple circuit, which will be proved in Lemma~\ref{bijection}. However, as it is not mandatory in this proof, we will prove this fact in the following of the article. \qed
\end{rem}

Now let us give some basic proprieties of the circuits in the Rauzy graphs.

\begin{lem}
\label{cycles}
Let $w$ be either a finite word or an infinite word over a finite alphabet and let $\Gamma_l(w)$ be a Rauzy graph of $w$ for some $l$ satisfying $1 \leq l \leq |w|$. Then for any elementary circuit $C$ in $\Gamma_l(w)$, there exists a unique primitive word $q$, up to conjugacy, such that the vertex set of $C$ is $\left\{p^{\frac{l}{|p|}}| p \in [q]\right\}$ and its edge set is $\left\{p^{\frac{l+1}{|p|}}| p \in [q]\right\}$.
\end{lem}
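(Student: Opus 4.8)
The plan is to analyze an elementary circuit $C$ in $\Gamma_l(w)$ by tracking how consecutive vertices along $C$ overlap. First I would recall that every edge of $\Gamma_l(w)$ is a length-$(l+1)$ word, and an edge $e$ from vertex $u$ to vertex $v$ means $u$ is the length-$l$ prefix of $e$ and $v$ is the length-$l$ suffix of $e$; equivalently, $u$ and $v$ share a common block of length $l-1$ in the middle, so $v$ is obtained from $u$ by deleting the first letter and appending one new letter. Writing the circuit as $v_1 \xrightarrow{e_1} v_2 \xrightarrow{e_2} \cdots \xrightarrow{e_{j-1}} v_j \xrightarrow{e_j} v_1$, I would define the bi-infinite (here: finite) word obtained by reading off the successive ``new'' letters: concretely, $v_1 = a_1 a_2 \cdots a_l$ and each step appends $a_{l+t}$ for $t = 1, \dots, j$, with the closure condition $v_{j+1} = v_1$ forcing $a_{l+t} = a_t$ for $t = 1, \dots, l$. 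Hence the letters $(a_i)$ are periodic with period $j$, and setting $s = a_1 a_2 \cdots a_j$ we get that each vertex $v_t$ is a length-$l$ factor of $s^\omega$ starting at position $t$, i.e. $v_t = (\sigma^{t-1} s)^{\frac{l}{j}}$ where $\sigma$ is the cyclic shift; in the notation of the paper, $\{v_1, \dots, v_j\} \subseteq [s]_l$ and likewise $\{e_1, \dots, e_j\} \subseteq [s]_{l+1}$.

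Next I would pin down $[s]$ up to conjugacy and show that the inclusions are in fact equalities. Because the $j$ vertices $v_1, \dots, v_j$ are \emph{distinct} (elementary circuit) and there are exactly $j$ of them, they must be exactly the $j$ conjugates-of-length-$l$ arising from the $j$ cyclic shifts of $s$, which forces $s$ to be primitive: if $s = r^k$ with $k \geq 2$, the shifts of $s$ would produce at most $|r| < j$ distinct length-$l$ words (one checks $l$ is a multiple of... — more carefully, $s^\omega = r^\omega$ has period $|r|$, so there are at most $|r|$ distinct length-$l$ factors of $s^\omega$), contradicting distinctness. So $q := s$ is primitive, $|q| = j$, and $\{v_t\} = \{p^{l/|p|} \mid p \in [q]\}$. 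The same computation applied to the edges $e_t$, which are the length-$(l+1)$ factors of $s^\omega = q^\omega$ at the $j$ successive positions, gives the edge set $\{p^{(l+1)/|p|} \mid p \in [q]\}$; distinctness of the $e_t$ is automatic once $q$ is primitive.

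Finally I would address uniqueness of $q$ up to conjugacy. If $q$ and $q'$ are both primitive and $\{p^{l/|p|} \mid p \in [q]\} = \{p^{l/|p|} \mid p \in [q']\}$, then in particular some length-$l$ prefix of $q^\omega$ equals some length-$l$ prefix of $(q')^\omega$; since $l \geq |q|$ and $l \geq |q'|$, the Fine–Wilf theorem (or a direct periodicity argument, using that $l$ is ``long enough'' — note $l \ge \max(|q|,|q'|)$ suffices here because these words literally contain full periods) shows $q^\omega = (q')^\omega$, and primitivity of both then forces $|q| = |q'|$ and $q' \in [q]$.

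The main obstacle I anticipate is the bookkeeping in the first paragraph: making the ``append one new letter'' description rigorous and correctly deriving the periodicity of the letter sequence from the closure condition $v_{j+1} = v_1$, since one must be careful that $j$ can be smaller than, equal to, or larger than $l$, and the formula $v_t = (\sigma^{t-1}q)^{l/|q|}$ must hold uniformly in all these regimes. Everything after that — primitivity of $q$ from the distinctness hypothesis, and Fine–Wilf for uniqueness — is routine once the setup is clean.
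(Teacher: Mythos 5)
Your construction of $s$ from the successive appended letters, the derivation of the periodicity $a_{i+j}=a_i$ from the closure condition $v_{j+1}=v_1$, and the deduction of primitivity of $s$ from the distinctness of the $j$ vertices are all correct. In fact this gives a unified, self-contained argument where the paper splits into two cases (circuit size at most $l$, delegated to a lemma of an earlier paper, versus circuit size larger than $l$, handled by reading off the last letters of the edges). Up to and including the identification of the vertex set with $\left\{p^{l/|p|}\mid p\in[q]\right\}$ and the edge set with $\left\{p^{(l+1)/|p|}\mid p\in[q]\right\}$, your proof is fine.

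The uniqueness step, however, has a genuine gap. First, you assume $l\ge|q|$ and $l\ge|q'|$, but $|q|$ equals the size $j$ of the circuit, and $j$ may exceed $l$ --- this is precisely the second case of the paper's proof and is explicitly allowed by your own setup (``$j$ can be \dots larger than $l$''); in that regime $p^{l/|p|}$ is a proper prefix of $p$ and the Fine--Wilf argument does not even get started. Second, even when $l\ge\max(|q|,|q'|)$, applying Fine--Wilf to a \emph{single} common word of length $l$ is not enough: the word $aabaa$ has length $5\ge\max(3,4)$ and is a prefix of both $(aab)^{\omega}$ and $(aaba)^{\omega}$, with $aab$ and $aaba$ primitive and non-conjugate, so your parenthetical claim that $l\ge\max(|q|,|q'|)$ suffices ``because these words contain full periods'' is false. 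What actually works in that regime is the equality of the whole sets rather than of one element: for $l\ge|q|$ one has $|[q]_l|=|q|$ and each element of $[q]_l$ determines its root as its length-$|q|$ prefix, so $[q]_l=[q']_l$ forces $|q|=|q'|$ and then $[q]=[q']$ directly, with no appeal to Fine--Wilf. For the case $|q|>l$ you additionally need the edge set, since the vertex set alone does not determine $[q]$ (for $l=1$ the sets $[ab]_1$ and $[aab]_1$ both equal $\{a,b\}$); one way to finish is to recover $[q]$ by reading the successive last letters of the edges once around the circuit, as in the paper's second case.
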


\begin{proof}
In the case that the size of $C$ is no larger than $l$, the lemma is proved by Lemma 5 in~\cite{Brlekli}. 
If the size of $C$ is larger than $l$, let its size be $k$ and let its vertices and edges be respectively $v_1,v_2,...,v_k$ and $e_1,e_2,...,e_k$. For each $i$ satisfying $1 \leq i \leq k$, let us define $p_i$ to be a word by concatenating consecutively the last letter of words $e_i,e_{i+1},...,e_{i+k-1}$ with $e_{r}=e_{r-k}$ if $r\geq k+1$. The words $p_1,p_2,...,p_k$ are pairwise conjugate. Further, for each $i$ satisfying $1 \leq i \leq k$, from the fact that the edges in the order of $e_i,e_{i+1},...,e_{i+k-1}$ form a circuit, we can deduce that $v_i$ is a suffix of $v_ip_i$. From the hypothesis that $|p_i| > |v_i|$, we have $v_i$ is the length-$k$ suffix of $p_i$ and $e_i$ is the length-$k+1$ suffix of $p_i$. As $p_1,p_2,...,p_k$ are pairwise conjugate, the vertex set of $C$ is $\left\{p^{\frac{k}{|p|}}| p \in [p_1]\right\}$ and its edge set is $\left\{p^{\frac{k+1}{|p|}}| p \in [p_1]\right\}$.

Here we prove the primitivity of $p_1$. If $p_1$ is not primitive, then there are two distinct integers $r,s$ satisfying $1 \leq r,s \leq k$ such that $p'_r=p'_s$, and further, $v_r=v_s$. This contradicts the hypothesis that $C$ contains $k$ distinct vertices.\qed
\end{proof}

From the previous lemma, each elementary circuit can be identified by an associated primitive word $q$ defined in the previous lemma and an integer $l$ such that $\Gamma_l(w)$ is the Rauzy graph in which the circuit is located. Let all elementary circuit be denoted by $C(q,l)$ with the parameters defined as above.

In~\cite{Brlekli} we defined the notion of {\em small circuit}, in this note we extend this notion to {\em quasi-small circuit}. Recall that the {\em small circuits} in the graph $\Gamma_l(w)$ are those elementary circuits whose sizes are no larger than $l$. A circuit $C$ in $\Gamma_l(w)$ is defined to be {\em primitive} if it is elementary and there exists a primitive factor $u$ of length $l+1$ such that the vertex set of $C$ is $[u]_{l}$ and the edge set is $[u]_{l+1}$. A circuit is called {\em quasi-small} if it is either small or primitive. Consequently, if $C(q,l)$ is a quasi-small circuit in $\Gamma_l(w)$, then $l \geq |q|-1$.

\begin{proof}[of Theorem~\ref{bound}]
We remark that the statements previously announced for small circuits of a finite word in Lemma 7 and Lemma 8 in~\cite{Brlekli} also hold for quasi-small circuits of a finite word as well as an infinite word over a finite alphabet. Thus, Theorem~\ref{bound} can be proved by using the arguments given in Lemma 10 and Lemma 11 in~\cite{Brlekli}. \qed
\end{proof}

\begin{exa}
Let us define $u=abaaabaaaaba$, the Rauzy graph $\Gamma_4(u^{\omega})$ is as follows:
\begin{center}
\begin{tikzpicture}[scale=0.2]
\tikzstyle{every node}+=[inner sep=0pt]
\draw [black] (15,-25.6) circle (3);
\draw (15,-25.6) node {$aaaa$};
\draw [black] (33.4,-17.4) circle (3);
\draw (33.4,-17.4) node {$aaba$};
\draw [black] (23.8,-33.5) circle (3);
\draw (23.8,-33.5) node {$baaa$};
\draw [black] (32.6,-42.2) circle (3);
\draw (32.6,-42.2) node {$abaa$};
\draw [black] (5.4,-17.4) circle (3);
\draw (5.4,-17.4) node {$aaab$};
\draw [black] (5.4,-42.2) circle (3);
\draw (5.4,-42.2) node {$baaa$};
\draw [black] (33.4,-3.2) circle (3);
\draw (33.4,-3.2) node {$baab$};
\draw [black] (47.2,-17.4) circle (3);
\draw (47.2,-17.4) node {$abaa$};
\draw [black] (5.4,-39.2) -- (5.4,-20.4);
\fill [black] (5.4,-20.4) -- (4.9,-21.2) -- (5.9,-21.2);
\draw (4.9,-29.8) node [left] {$baaab$};
\draw [black] (29.6,-42.2) -- (8.4,-42.2);
\fill [black] (8.4,-42.2) -- (9.2,-42.7) -- (9.2,-41.7);
\draw (19,-42.7) node [below] {$abaaa$};
\draw [black] (33.3,-20.4) -- (32.7,-39.2);
\fill [black] (32.7,-39.2) -- (33.22,-38.42) -- (32.22,-38.39);
\draw (33.55,-29.81) node [right] {$aabaa$};
\draw [black] (30.47,-40.09) -- (25.93,-35.61);
\fill [black] (25.93,-35.61) -- (26.15,-36.53) -- (26.85,-35.82);
\draw (25.4,-38.33) node [below] {$abaaa$};
\draw [black] (21.57,-31.5) -- (17.23,-27.6);
\fill [black] (17.23,-27.6) -- (17.49,-28.51) -- (18.16,-27.77);
\draw (16.61,-30.04) node [below] {$baaaa$};
\draw [black] (36.4,-17.4) -- (44.2,-17.4);
\fill [black] (44.2,-17.4) -- (43.4,-16.9) -- (43.4,-17.9);
\draw (40.3,-17.9) node [below] {$aabaa$};
\draw [black] (45.11,-15.25) -- (35.49,-5.35);
\fill [black] (35.49,-5.35) -- (35.69,-6.27) -- (36.41,-5.58);
\draw (40.83,-8.83) node [right] {$abaab$};
\draw [black] (33.4,-6.2) -- (33.4,-14.4);
\fill [black] (33.4,-14.4) -- (33.9,-13.6) -- (32.9,-13.6);
\draw (32.9,-10.3) node [left] {$baaba$};
\draw [black] (12.72,-23.65) -- (7.68,-19.35);
\fill [black] (7.68,-19.35) -- (7.96,-20.25) -- (8.61,-19.49);
\draw (12.99,-21.01) node [above] {$aaaab$};
\draw [black] (8.4,-17.4) -- (30.4,-17.4);
\fill [black] (30.4,-17.4) -- (29.6,-16.9) -- (29.6,-17.9);
\draw (19.4,-16.9) node [above] {$aaaba$};
\end{tikzpicture}
\end{center}

In this graph, there are three elementary circuits: $C(aaaab,4)$, $C(aaab,4)$ and $C(aab,4)$. All of these circuits are quasi-small: $C(aaab,4)$ and $C(aab,4)$ are both small, while $C(aaaab,4)$ is not small, it is primitive. Remark that the number of vertices and edges in this graph are respectively $8$ and $10$, the total number of quasi-small cycles in this graph, which is $3$, is bounded by $10-8+1$.\qed
\end{exa}

\section{Extended Lie complexity and prefix Lie complexity}
\label{sec:elc}
In this section, we consider two different extensions of Lie complexity.
For a given infinite word $w$ over a finite alphabet, the extended Lie complexity function $eL_w: \mathbb{N} \to \mathbb{N}$  is defined that for any positive integer $n$, $eL_w(n)$ is cardinality of the following set:

$$\left\{[p]_n|[p]_n \subset \fac(w), n \geq |p|, p \in \prim(w) \right\}.$$

It is obvious that for any positive integer $n$, $L_w(n) \leq eL_w(n)$. However, we have the following fact:

\begin{thm}
\label{ELie}
For every infinite word $w$ over a finite alphabet and for every positive integer $n$, we have $$eL_w(n) \leq C_w(n)-C_w(n-1)+1.$$ 
\end{thm}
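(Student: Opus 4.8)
The plan is to identify the set defining $eL_w(n)$ with the set of quasi-small circuits of the Rauzy graph $\Gamma_{n-1}(w)$ and then invoke Theorem~\ref{bound}. First I would dispose of $n=1$ by hand: a primitive $p$ with $|p|\le 1$ is a single letter, $[p]_1=\{p\}$, so $eL_w(1)=|\Alphabet(w)|=C_w(1)=C_w(1)-C_w(0)+1$ since $C_w(0)=1$, and the inequality holds with equality. So suppose $n\ge 2$.

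The heart of the argument is the identity $eL_w(n)=Qs_w(n-1)$. I would prove it by exhibiting a bijection between the conjugacy classes counted by $eL_w(n)$ — namely the primitive classes $[p]$ with $|p|\le n$ and $[p]_n\subseteq\fac(w)$ — and the quasi-small circuits of $\Gamma_{n-1}(w)$, sending $[p]$ to $C(p,n-1)$. By Lemma~\ref{cycles} the circuit $C(p,n-1)$ has edge set $[p]_n$ and vertex set $[p]_{n-1}$; the former lies in $F_w(n)$ by hypothesis, and each element of the latter is the length-$(n-1)$ prefix of an element of $[p]_n$, hence lies in $F_w(n-1)$, so $C(p,n-1)$ really is a circuit of $\Gamma_{n-1}(w)$, and it is quasi-small because $|p|\le n=(n-1)+1$. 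Conversely, by the discussion following Lemma~\ref{cycles}, every quasi-small circuit of $\Gamma_{n-1}(w)$ has the form $C(q,n-1)$ with $q$ primitive and $|q|\le n$, and its edge set $[q]_n\subseteq F_w(n)\subseteq\fac(w)$ puts $[q]$ in the domain. Injectivity is the only delicate point: if $[p]$ and $[p']$ are primitive classes with $[p]_n=[p']_n$, then, because $u\mapsto u^{n/|u|}$ is injective on $[p]$ (so $|[p]_n|=|p|$) one gets $|p|=|p'|$, and reading off the length-$|p|$ prefix of a common element of $[p]_n$ and $[p']_n$ recovers a word lying in both $[p]$ and $[p']$, whence $[p]=[p']$.

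With the identity in hand, the theorem follows at once from Theorem~\ref{bound} applied with $n-1$ in place of $n$ (legitimate since $n-1\ge 1$): $eL_w(n)=Qs_w(n-1)\le C_w(n)-C_w(n-1)+1$.

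I expect the bijection — in particular the injectivity claim, together with the bookkeeping between the regimes $|p|=n$, $|p|<n$ with $|p|\mid n$, and $|p|<n$ with $|p|\nmid n$ (corresponding respectively to primitive circuits and to small circuits of $\Gamma_{n-1}(w)$, the last being exactly the ``new'' contributions absent from $L_w(n)$) — to be the main obstacle. It is worth stressing why one cannot simply rerun the proof of Theorem~\ref{Lie:1}: the edge sets $[p]_n$ of the circuits $C(p,n-1)$ are in general \emph{not} pairwise disjoint — for instance $[aba]_5$ and $[abaab]_5$ share the words $abaab$ and $baaba$ — so a naive ``remove one edge per cycle'' bound breaks down, and the sharper count of quasi-small circuits provided by Theorem~\ref{bound} is genuinely needed.
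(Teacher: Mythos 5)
Your proposal is correct and follows essentially the same route as the paper: the paper's proof also maps each class $[p]_n$ to the quasi-small circuit $C(p,n-1)$ of $\Gamma_{n-1}(w)$, establishes injectivity via the cardinality argument $|[p]_n|=|p|$ (its Lemmas~\ref{size} and~\ref{bijection}), and concludes by Theorem~\ref{bound}. The only differences are cosmetic: you additionally verify surjectivity (not needed for the inequality) and spell out the $n=1$ case that the paper dismisses as obvious.
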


\begin{exa}
Let us consider the Lie complexity and the extended Lie complexity functions of the word $v=(aba)^{\omega}$:
$$
 \begin{array}{|c|c|c|c|c|c|c|c|c|c|c|c|c|}
  \hline
  n & \mbx{0} & \mbx{1} & \mbx{2} & \mbx{3} & \mbx{4} & \mbx{5} & \mbx{6} & \mbx{7} & \mbx{8} & \mbx{9} & \mbx{10} & \cdots \\
  \hline
  L_{v}(n)& 1 & 2 & 2 & 1 & 0 & 0 & 1 & 0 & 0 & 1 & 0 & \cdots\\
  \hline
  eL_v(n) & 1 & 2 & 2 & 1 & 1 & 1 & 1 & 1 & 1 & 1 & 1 & \cdots\\
  \hline
 \end{array}
$$ 
We can easily check that $L_{v}(n) \leq eL_{v}(n)$ for every positive integer $n$.\qed
\end{exa}

Theorem~\ref{ELie} is a consequence of the following lemmas.

\begin{lem}
\label{size}
Let $p,q$ be two primitive words such that $|p|=|q|$, then for any integer $n$ such that $n \geq |p|-1$, the cardinality of $[p]_n$ equals $|p|$; further, $[p]_n = [q]_n$ if and only if $p,q$ are conjugate. 
\end{lem}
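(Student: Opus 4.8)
The plan is to establish the two assertions in turn, the common tool being the observation that $[p]_n$ is exactly the set $F_{p^\omega}(n)$ of length-$n$ factors of the purely periodic word $p^\omega$. Indeed, since $p$ is primitive the class $[p]$ has $m:=|p|$ elements, each such element $u$ occurs as $p^\omega[i..i+m-1]$ for a unique $i\in\{0,\dots,m-1\}$, and then $u^{n/|u|}=u^{n/m}=p^\omega[i..i+n-1]$; letting $i$ vary recovers precisely the length-$n$ factors of $p^\omega$.

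For the cardinality claim I would show that for $n\ge m-1$ the $m$ factors of $p^\omega$ starting at positions $0,1,\dots,m-1$ are pairwise distinct, whence $|[p]_n|=C_{p^\omega}(n)=m$. If two of them coincided, say the ones starting at $i<j$ with $d:=j-i\le m-1$, then $p^\omega$ would carry the period $d$ on a window of length at least $m$ in addition to its global period $m$; the Fine--Wilf theorem then forces that window --- and hence $p^\omega$ and $p$ --- to have period $\gcd(d,m)$, a proper divisor of $m$, contradicting primitivity. (For $n\ge m$ this also follows from Lemma~\ref{cycles} applied to $p^\omega$.)

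For the equivalence, the ``if'' direction is immediate, conjugate words having the same conjugacy class and hence the same set $[\cdot]_n$. For ``only if'', assume $[p]_n=[q]_n$ with $|p|=|q|=m$. If $n\ge m$, take any $w\in[p]_n=[q]_n$; viewing $w$ as a length-$n$ factor of $p^\omega$ shows its length-$m$ prefix lies in $[p]$, and viewing it as a factor of $q^\omega$ shows the same prefix lies in $[q]$, so $[p]\cap[q]\ne\emptyset$ and $p,q$ are conjugate. If $n=m-1$, replace $q$ by the conjugate having the same length-$(m-1)$ prefix as $p$ (this alters neither $[q]$ nor $[q]_n$), so that $p=xa$ and $q=xb$ with $|x|=m-1$ and $a,b$ letters. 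Now, summing over the $m$ distinct words of $[p]_{m-1}$ --- which are the length-$(m-1)$ prefixes of the $m$ conjugates of $p$, the removed last letters running exactly through the letters of $p$ --- the total number of occurrences of any fixed letter $c$ equals $(m-1)$ times its number of occurrences in $p$; the analogous identity holds for $q$. Since $[p]_{m-1}=[q]_{m-1}$ these totals coincide, so $p$ and $q$ have the same number of occurrences of each letter; as they differ at most in their last letter, this gives $a=b$ and $p=q$.

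The step I expect to be the main obstacle is precisely the boundary case $n=m-1$: the words of $[p]_n$ are then too short to contain a full conjugate of $p$, so the direct ``read off $[p]$ from $[p]_n$'' argument of the case $n\ge m$ is unavailable, and it is the letter-counting identity above that repairs this. One should also check that the Fine--Wilf step in the cardinality claim survives at $n=m-1$, which it does, the required inequality on lengths reducing to $\gcd(d,m)\ge1$.
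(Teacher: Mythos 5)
Your proof is correct, and it reaches the same two pressure points as the paper (the cardinality of $[p]_{|p|-1}$ and the recovery of $[p]$ from $[p]_{|p|-1}$), but it handles the first of them by a genuinely different and heavier route. For the cardinality claim you pass through the periodic word $p^\omega$ and invoke the Fine--Wilf theorem to rule out two starting positions giving the same length-$n$ factor; the paper instead observes directly that two distinct conjugates $ta$ and $tb$ of $p$ with the same length-$(|p|-1)$ prefix $t$ and $a\neq b$ would have different multisets of letters, which is impossible for conjugates --- a one-line argument needing no periodicity lemma. Your Fine--Wilf detour is valid (the window length $d+n\geq d+|p|-\gcd(d,|p|)$ checks out, and a period $\gcd(d,|p|)<|p|$ dividing $|p|$ contradicts primitivity), and it has the side benefit of making the identification $[p]_n=F_{p^\omega}(n)$ explicit, but it is more machinery than the statement requires. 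For the equivalence at $n=|p|-1$, your normalization $p=xa$, $q=xb$ followed by the letter-counting identity $\sum_{t\in[p]_{m-1}}|t|_c=(m-1)|p|_c$ is essentially a careful expansion of the paper's terse assertion that $[p]_n=[q]_n$ forces $p$ and $q$ to have the same letter counts; your version actually supplies the justification the paper omits, and correctly uses the already-proved cardinality statement to know the sum runs over $m$ distinct words. The only caveat, shared with the paper, is the degenerate case $|p|=1$, $n=0$, which is excluded anyway since $[w]_n$ is only defined for positive $n$.
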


\begin{proof}
If $n \geq |p|$, there exists a bijection from $[p]_n$ to $[p]$. Thus, the statement is true.

If $n=|p|-1$, let us first prove that the cardinality of $[p]_n$ equals $|p|$. If it is not the case, there exist a word $t \in [p]_n$ and two different letters $a,b$ such that $ta,tb \in [p]$. However, it cannot be true because the number of occurrences of $a$ in $ta$ and $tb$ are not same.

Now let us suppose $n=|p|-1=|q|-1$ and $[p]_n = [q]_n$. From the hypothesis, there exists a word $t \in [p]_n$ and two letters $a,b$ such that $ta \in [p], tb \in [q]$. However, with the hypothesis $[p]_n = [q]_n$, we can deduce that the numbers of occurrences of each letter in $p$ and in $q$ should be the same. Thus, $a=b$ and $p,q \in [ta]$.\qed
\end{proof}

\begin{lem}
\label{bijection}
Let $w$ be an infinite word with over a finite alphabet and let $n$ be a positive integer. 
For any $[p]_n \in \left\{[p]_n|[p]_n \subset \fac(w), n \geq |p|, p \in \prim(w) \right\}$, there exists a quasi-small cycle $C(p, n-1)$ in the graph $\Gamma_w(n-1)$. Further, let $[p]_n,[q]_n$ be two elements in $\left\{[p]_n|[p]_n \subset \fac(w), n \geq |p| \right\}$, $C(p, n-1)=C(q, n-1)$ if and only if $p,q$ are conjugate.
\end{lem}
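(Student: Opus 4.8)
The plan is to use Lemma~\ref{cycles} as the structural backbone and Lemma~\ref{size} as the counting tool. First I would establish the \emph{existence} part: given $[p]_n \subset \fac(w)$ with $p$ primitive and $n \geq |p|$, I want to exhibit the elementary circuit $C(p, n-1)$ in $\Gamma_{n-1}(w)$. The natural candidate is the subgraph whose vertex set is $[p]_{n-1} = \{r^{\frac{n-1}{|r|}} \mid r \in [p]\}$ and whose edge set is $[p]_n = \{r^{\frac{n}{|r|}} \mid r \in [p]\}$; since $[p]_n \subset \fac(w)$ and each $r^{\frac{n}{|r|}}$ has as prefix and suffix the corresponding elements of $[p]_{n-1}$, the vertices lie in $F_w(n-1)$ and the edges in $F_w(n)$, so this is a genuine subgraph of the Rauzy graph. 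I would then check it is a simple closed path: listing the conjugates of $p$ in cyclic order $r_1, r_2, \dots, r_{|p|}$ (each obtained from the previous by a one-letter cyclic shift), the $\frac{n}{|p|}$-powers $r_i^{\frac{n}{|p|}}$ form a closed walk on the $\frac{n-1}{|p|}$-powers, and by Lemma~\ref{size} (using $n - 1 \geq |p| - 1$) the $|p|$ vertices $r_i^{\frac{n-1}{|p|}}$ are pairwise distinct, hence the circuit is elementary of size $|p|$. Since $n - 1 \geq |p| - 1$, this $C(p, n-1)$ is either small (when $n - 1 \geq |p|$) or primitive (when $n - 1 = |p| - 1$, because $p$ is a primitive factor of length $n$), so in all cases it is quasi-small.

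Next I would turn to the equivalence. The direction ``$p, q$ conjugate $\Rightarrow C(p, n-1) = C(q, n-1)$'' is immediate, since $[p]_{n-1} = [q]_{n-1}$ and $[p]_n = [q]_n$ whenever $[p] = [q]$, so the two circuits have identical vertex and edge sets. For the converse, suppose $C(p, n-1) = C(q, n-1)$ as subgraphs of $\Gamma_{n-1}(w)$. Then in particular their edge sets coincide: $[p]_n = [q]_n$. Now I invoke Lemma~\ref{size}: to apply it I first need $|p| = |q|$, which follows because the common edge set has cardinality $|p|$ (by Lemma~\ref{size} applied to $p$, as $n \geq |p| \geq |p|-1$) and also cardinality $|q|$, forcing $|p| = |q|$; then the ``only if'' clause of Lemma~\ref{size} (valid since $n \geq |p| = |q| \geq |p| - 1$) gives that $p$ and $q$ are conjugate.

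The main obstacle I anticipate is the existence direction, specifically verifying carefully that the proposed subgraph really is an elementary circuit rather than merely a closed walk — one must rule out repeated vertices, and this is exactly where primitivity of $p$ enters through Lemma~\ref{size}. A secondary subtlety is the boundary case $n - 1 = |p| - 1$: here the circuit has size $|p| = n > n - 1 = l$, so it is \emph{not} small, and one must explicitly appeal to the definition of a primitive circuit (using that $p$ is a primitive factor of $w$ of length $l + 1 = n$, with vertex set $[p]_l$ and edge set $[p]_{l+1}$) to conclude it is quasi-small. Everything else is bookkeeping with conjugates and fractional powers, and the uniqueness-up-to-conjugacy statement of Lemma~\ref{cycles} guarantees the labelling $C(p, n-1)$ is well defined so the equivalence is meaningful.
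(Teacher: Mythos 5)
Your proof is correct and follows essentially the same route as the paper's: exhibit the simple closed path with vertex set $[p]_{n-1}$ and edge set $[p]_n$, use Lemma~\ref{size} to see it is an elementary (hence quasi-small) circuit, and derive the converse from the cardinality and conjugacy statements of Lemma~\ref{size}. The paper's own proof is much terser (``we can easily check''); you have merely filled in the details, including the boundary case $n-1=|p|-1$ where the circuit is primitive rather than small.
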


\begin{proof}
If $[p]_n \in \left\{[p]_n|[p]_n \subset \fac(w), n \geq |p|, p \in \prim(w) \right\}$, we can easily check that there exists a simply closed path $([p]_{n-1},[p]_n)$ in $\Gamma_w(n-1)$. Further, from the previous lemma, this simply closed path is a quasi-small circuit and can be identified as $C(p, n-1)$.

If there exist two primitive $p,q$ such that $|p| \leq n$, $|q| \leq n$ and $C(p, n-1)=C(q, n-1)$, then $|[p]_n|=|[q]_n|$. Further, from previous lemma, $|p|=|q|$, and moreover, $p,q$ are conjugate.\qed
\end{proof}

\begin{proof}[of Theorem~\ref{ELie}]
This statement is obviously true for $n=1$. 

Now for any positive integer $n \geq 2$, from the previous lemma, there exists an injection from $\left\{[p]_n|[p]_n \subset \fac(w)\right\}$ to quasi-small cycles in $\Gamma_w(n-1)$, we then can conclude by using Theorem~\ref{bound}. \qed
\end{proof}

Here let us define the prefix Lie complexity. Let $w$ be a finite word, the prefix Lie complexity function $pL_w: \mathbb{N} \to \mathbb{N}$ counts the number of conjugacy classes such that every element of the conjugacy class appears in a prefix of $w$. Formally, for any positive integer $i$ satisfying $1 \leq i \leq |w|$, $pL_w(i)$ is the cardinality of the following set: 
$$\left\{[p]| [p] \subset \fac(w_i)\right\}.$$

\begin{thm}
\label{prefix}
Let $w$ be a finite word, then for any integer $i$ satisfying $i \geq 1 $, one has:

$$0 \leq pL_w(i+1)-pL_w(i) \leq 1.$$
\end{thm}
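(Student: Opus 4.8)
The plan is to analyze how the set $CL_{w_{i+1}}(i) = \{[p] : [p] \subset \fac(w_{i+1})\}$ relates to $CL_{w_i}(i)$ and $CL_{w_i}(i-1)$, and similarly for length-$i$ classes inside $w_i$. First I would establish the lower bound $pL_w(i+1) \geq pL_w(i)$, which is immediate: every factor of $w_i$ is a factor of $w_{i+1}$, so if $[p] \subset \fac(w_i)$ then $[p] \subset \fac(w_{i+1})$; hence the counted sets only grow, summed over all relevant class lengths. Wait — one must be careful, because $pL_w(i)$ counts classes $[p]$ with $[p] \subset \fac(w_i)$ \emph{over all lengths $|p|$}, not a fixed length. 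So passing from $w_i$ to $w_{i+1}$ can only add the new factor $w_{i+1}$ itself (the length-$(i+1)$ prefix) as a genuinely new factor, plus possibly complete a conjugacy class that was already partly present. This is the key structural observation to pin down.

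The main work is the upper bound $pL_w(i+1) - pL_w(i) \leq 1$. The plan is to show that at most one conjugacy class $[p]$ can satisfy $[p] \subset \fac(w_{i+1})$ but $[p] \not\subset \fac(w_i)$. Suppose $[p] \subset \fac(w_{i+1})$ and $[p] \not\subset \fac(w_i)$. Since the only factor of $w_{i+1}$ that is not a factor of $w_i$ is the full prefix $w_{i+1}$ (every factor of length $\leq i$ already occurs in $w_i$, being a factor of $w_i$ by a short argument, and the only length-$(i+1)$ factor of $w_{i+1}$ is $w_{i+1}$ itself), the class $[p]$ must contain $w_{i+1}$ as one of its conjugates. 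But then $|p| = i+1$ and $[p] = [w_{i+1}]$ is forced. Hence there is at most one such new class, giving $pL_w(i+1) \leq pL_w(i) + 1$.

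I would carry out the steps in this order: (1) note factor monotonicity to get $pL_w(i+1) - pL_w(i) \geq 0$; (2) prove the auxiliary fact that $\fac(w_{i+1}) \setminus \fac(w_i) \subseteq \{w_{i+1}\}$ by a length case split — a factor of $w_{i+1}$ of length $\leq i$ either lies entirely within the first $i$ letters or is a suffix, but any suffix of length $\leq i$ of $w_{i+1}$ is also a factor of $w_i$ when it has length $\le i$, except we must check the length-$i$ suffix, which is a factor of $w_{i+1}$ of length $i$ and indeed sits inside $w_{i+1}$; a cleaner route is simply that $\fac(w_{i+1}) = \fac(w_i) \cup \{\text{factors of } w_{i+1} \text{ ending at position } i+1\}$, and among the latter only the full word $w_{i+1}$ has length $> i$ while all shorter ones are also factors of $w_i$ — here I would double-check the length-$i$ suffix of $w_{i+1}$ really is a factor of $w_i$, which holds iff it equals some length-$i$ block of $w_i$, and this may fail, so the honest statement is $\fac(w_{i+1})\setminus\fac(w_i)\subseteq\{v : v \text{ is a suffix of } w_{i+1}, |v|\geq i\} = \{w_{i+1}\} \cup \{\text{length-}i\text{ suffix}\}$; (3) deduce that any class $[p]$ newly fully contained in $\fac(w_{i+1})$ must use one of these new factors as a conjugate, forcing $|p| \in \{i, i+1\}$; (4) rule out or absorb the length-$i$ case — note that the length-$i$ suffix $s$ of $w_{i+1}$ has $[s]_i = [s]$, and if $[s] \subset \fac(w_{i+1})$ but $[s] \not\subset \fac(w_i)$ then some conjugate of $s$ is not in $\fac(w_i)$, hence equals $w_{i+1}$, impossible by length; so actually the length-$i$ case contributes nothing new and the only possible new class is $[w_{i+1}]$; (5) conclude.

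I expect step (2)/(4), the precise bookkeeping of which new factors appear when extending the prefix by one letter and arguing the length-$i$ suffix cannot complete a new conjugacy class, to be the main obstacle — everything else is monotonicity and a counting argument. The resolution is that a \emph{new} factor entering at step $i{+}1$ is necessarily a suffix of $w_{i+1}$, and for a whole conjugacy class to become newly covered, that class must contain such a new factor; since all conjugates of a class have equal length and the genuinely new long factor is $w_{i+1}$, the class is $[w_{i+1}]$, uniquely.
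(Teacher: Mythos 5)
There is a genuine gap in your step (2), and it propagates through steps (3)--(5). Your claim that $\fac(w_{i+1})\setminus\fac(w_i)$ is contained in $\{w_{i+1}\}\cup\{\text{the length-}i\text{ suffix}\}$ is false: a factor of $w_{i+1}$ that is not a factor of $w_i$ is indeed necessarily a suffix of $w_{i+1}$ (every occurrence of it must use position $i+1$), but it can have \emph{any} length from $1$ to $i+1$, not only length $\geq i$. For example, with $w=aab$ and $i=2$ one has $\fac(w_3)\setminus\fac(w_2)=\{b,ab,aab\}$, which contains the length-$1$ word $b$. Consequently your conclusion in step (4), that the only possible new conjugacy class is $[w_{i+1}]$, is also false: in this example $pL_w(3)-pL_w(2)=1$ and the unique new class is $[b]$, of length $1$. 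What survives of your argument is only that a newly completed class must contain a conjugate that is a suffix of $w_{i+1}$ not occurring in $w_i$; this alone does not exclude several new classes of different lengths, so the upper bound is not established.

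The paper closes exactly this gap with a rotation argument comparing two candidate new classes directly. For each new class $[p]$ choose a conjugate $p'$ not occurring in $w_i$; then the unique occurrence of $p'$ in $w_{i+1}$ is as the length-$|p|$ suffix. If there were two new classes with such representatives $p'$ and $q'$, they must have different lengths (equal lengths would force $p'=q'$ and hence equal classes), say $|p'|<|q'|$, so $q'=tp'$ with $t$ nonempty. The rotation $p't$ lies in $[q]\subset\fac(w_{i+1})$, and any occurrence of $p't$ in $w_{i+1}$ contains an occurrence of $p'$ ending at position at most $i+1-|t|\leq i$, i.e.\ an occurrence inside $w_i$, contradicting the uniqueness of $p'$'s occurrence as the suffix. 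You would need to add an argument of this kind to make the upper bound go through; your lower bound via monotonicity of the factor sets is fine.
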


\begin{proof}
First, it is trivial that $$\left\{[p]| [p] \subset \fac(w_i)\right\} \subset \left\{[p]| [p] \subset \fac(w_{i+1})\right\}.$$ Thus, $0 \leq pL_w(i+1)-pL_w(i)$.

Now let us prove $pL_w(i+1)-pL_w(i) \leq 1$ by contradiction. If this inequality does not hold, then there exist two different conjugacy classes $[p], [q]$ such that 
$[p], [q] \subset \fac(w_{i+1})$ and $[p], [q] \not \subset \fac(w_i)$. Further, there exist two words $p', q'$ satisfying the following proprieties:\\
1) $p' \in [p], q' \in [q];$\\
2) the suffix of $w_{i+1}$ of length $|p|$ (resp. of length $|q|$) is the uni-occurrence of $p'$ (resp. $q'$) in $w_{i+1}$.\\
From 1) and 2), $|p'| \neq |q'|$, otherwise, $p'=q'$ and $[p]=[q]$. Without loss of generality, let us suppose that $|p'| < |q'|$. Thus, there exists a non-empty word $t$ such that $q'=tp'$. Remark that $p't \in [q]$, hence, $p't \in \fac (w_{i+1})$. However, if it is the case, $p'$ has an occurrence in $w_i$. This contradicts the propriety 2) listed as above. Hence, for any positive integer $i$ satisfying $i \geq 1$, there exists at most one conjugacy class $[p]$ such that $[p] \subset \fac(w_{i+1})$ and $[p] \not \subset \fac(w_i)$. we conclude.\qed
\end{proof}

Let $w$ be an infinite sequence. Let us define the the sequence $(\Delta pL_w(n))_{n \in \mathbb{N}}$ as follows: For any non-negative integer $n$, let
$$\Delta pL_w(n)=pL_w(n+1)-pL_w(n).$$

\begin{cor}
\label{cor}
For any infinite sequence $w$, the sequence $(\Delta pL_w(n))_{n \in \mathbb{N}}$ is a $0,1$-sequence.
\end{cor}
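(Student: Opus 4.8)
The plan is to deduce this directly from Theorem~\ref{prefix}, the only subtlety being that $pL$ was defined for finite words, so one first has to say what $pL_w(n)$ means for an infinite sequence $w$. The natural reading is that $pL_w(n) := pL_{w_N}(n)$ for any $N \geq n$; this is well-defined because the defining set $\left\{[p]\mid [p] \subset \fac(w_n)\right\}$ depends only on the length-$n$ prefix $w_n$, not on how the word is extended. First I would record this observation explicitly, so that $\Delta pL_w(n) = pL_w(n+1) - pL_w(n)$ is genuinely the increment of a prefix Lie complexity function of a finite word.

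Next, I would fix an arbitrary non-negative integer $n$ and apply Theorem~\ref{prefix} to the finite word $u = w_{n+1}$ with the index $i = n$ (which satisfies $i \geq 1$ once $n \geq 1$; the case $n = 0$ is handled separately and trivially, since $pL_w(1) = 1$ and $pL_w(0) = 1$ by the convention $[\varepsilon] = \{\varepsilon\}$, giving $\Delta pL_w(0) = 0$). Theorem~\ref{prefix} yields $0 \leq pL_u(n+1) - pL_u(n) \leq 1$, and since $pL_u(n+1) = pL_{w_{n+1}}(n+1) = pL_w(n+1)$ and $pL_u(n) = pL_{w_{n+1}}(n) = pL_w(n)$, this is exactly $0 \leq \Delta pL_w(n) \leq 1$.

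Since $\Delta pL_w(n)$ is an integer lying in the interval $[0,1]$, it equals $0$ or $1$, and as $n$ was arbitrary the whole sequence $(\Delta pL_w(n))_{n \in \mathbb{N}}$ takes values only in $\{0,1\}$, i.e.\ it is a $0,1$-sequence. There is no real obstacle here: the content is entirely in Theorem~\ref{prefix}, and the corollary is just the bookkeeping step of transferring that statement from finite words to all prefixes of an infinite word simultaneously; the one point worth stating carefully is the consistency of the definition of $pL_w$ on prefixes.
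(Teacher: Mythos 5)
Your proof is correct and follows essentially the same route the paper intends: the corollary is stated there without proof as an immediate consequence of Theorem~\ref{prefix}, and your observation that $pL_w(n)$ depends only on the prefix $w_n$ (so the theorem for finite words transfers to the infinite word) is exactly the required bookkeeping. One trivial slip in your $n=0$ case: with the paper's conventions $pL_w(0)=1$ and $pL_w(1)=2$ (both $[\varepsilon]$ and $[w[1]]$ are counted), so $\Delta pL_w(0)=1$ rather than $0$ --- still a $0,1$-value, so the conclusion is unaffected.
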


\section{More facts on Lie complexity}
\label{sec:facts}

In this section we prove Theorem~\ref{Lie:2} and ~\ref{Lie:3}.

\begin{lem}
\label{length-2}
Let $w$ be an infinite word over a finite alphabet and let $n$ be a positive integer larger than $3$, then there exists a bijection between the sets:
$$\cup_{i|n, 1 \leq i <n}\left\{[p]_{n}|[p]_{n} \subset \fac(w), p \in \prim(w), |p|=i\right\},$$
$$\left\{C(p,n-2)|[p]_{n} \subset \fac(w), p \in \prim(w), |p|< n\right\}.$$
\end{lem}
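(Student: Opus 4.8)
The plan is to take as the bijection the map $\Phi$ that sends a class $[p]_{n}$ on the left to the elementary circuit $C(p,n-2)$ of $\Gamma_{n-2}(w)$ on the right, and to show that $\Phi$ is well defined and invertible. The underlying point will be that both displayed sets are faithfully indexed by the same data, namely the conjugacy classes of primitive factors $p$ of $w$ with $|p|<n$ and $[p]_{n}\subseteq\fac(w)$, so the content lies in checking the two indexings. A preliminary observation to be used throughout is that for any finite word $u$ and any $m$ with $0\le m\le k$, the length-$m$ prefix of $u^{k/|u|}$ is exactly $u^{m/|u|}$, since both are the length-$m$ prefix of $u^{\omega}$. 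It follows that $[p]_{n}\subseteq\fac(w)$ forces $[p]_{n-1}\subseteq\fac(w)$ and $[p]_{n-2}\subseteq\fac(w)$: for each $u\in[p]$ the word $u^{n/|u|}$ is a factor of $w$, hence so are its prefixes $u^{(n-1)/|u|}$ and $u^{(n-2)/|u|}$.

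Next I would show that $\Phi$ takes values in the right-hand set. By Lemma~\ref{cycles} the object $C(p,n-2)$ has vertex set $[p]_{n-2}$ and edge set $[p]_{n-1}$, which by the preceding paragraph are contained in $F_{w}(n-2)$ and $F_{w}(n-1)$ respectively; hence the associated closed path is a sub-graph of $\Gamma_{n-2}(w)$. Because $|p|<n$ one has $|p|\le(n-2)+1$, so this circuit is quasi-small: small when $|p|\le n-2$ and a primitive circuit when $|p|=n-1$. Moreover $n-2\ge|p|-1$, so Lemma~\ref{size} gives $\Card([p]_{n-2})=|p|$, which shows the path is elementary of size $|p|$ and is legitimately the circuit $C(p,n-2)$. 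Thus for every $[p]_{n}$ appearing on the left the circuit $C(p,n-2)$ belongs to the right-hand set.

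Then I would establish injectivity and surjectivity using Lemmas~\ref{cycles} and~\ref{size}. If $C(p,n-2)=C(q,n-2)$, then $p$ and $q$ are conjugate by the uniqueness clause of Lemma~\ref{cycles}, hence $[p]_{n}=[q]_{n}$; conversely, if $[p]_{n}=[q]_{n}$ with $p,q$ primitive of length less than $n$, then $|p|=|q|$ (equal cardinalities) and therefore $p$ and $q$ are conjugate, both by Lemma~\ref{size} --- so the left-hand classes are in bijection with the conjugacy classes of such $p$. Surjectivity is then immediate: any $C(p,n-2)$ in the right-hand set equals $\Phi([p]_{n})$, and $[p]_{n}$ lies in the left-hand set by the very description of that set.

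The step I expect to be the main obstacle is the bookkeeping that makes the two index sets coincide exactly --- checking that the primitive factors $p$ with $|p|<n$ and $[p]_{n}\subseteq\fac(w)$ which index the circuits $C(p,n-2)$ of $\Gamma_{n-2}(w)$ are precisely those that index the classes $[p]_{n}$ on the left --- and, along with it, using $n>3$ to guarantee that every circuit so produced genuinely remains quasi-small. The word-combinatorial inputs (prefixes of fractional powers, the equality $\Card([p]_{m})=|p|$ for $m\ge|p|-1$, and the recovery of a conjugacy class from a circuit) are routine consequences of Lemmas~\ref{size} and~\ref{cycles}; it is this matching of the two descriptions that needs care.
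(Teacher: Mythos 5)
Your proof is correct and follows essentially the same route as the paper's: the bijection is $[p]_n \mapsto C(p,n-2)$, justified by the observation that $[p]_n \subseteq \fac(w)$ forces the lower fractional powers $[p]_{n-1}$ and $[p]_{n-2}$ to be factors as well, so the corresponding circuit persists in $\Gamma_{n-2}(w)$ (the paper phrases this as stepping down from the circuit $C(p,n-1)$ supplied by Lemma~\ref{bijection}). The only point to watch is divisibility: the left-hand union runs over proper divisors $i$ of $n$, whereas your identification of both sides with ``primitive $p$, $|p|<n$, $[p]_n\subseteq\fac(w)$'' silently drops the condition $|p|\mid n$, so your surjectivity step is only valid if the right-hand set is read with that same restriction --- which is how the paper uses the lemma in the proof of Theorem~\ref{Lie:2}.
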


\begin{proof}
From Lemma~\ref{bijection}, there exists a bijection between $$\cup_{i|n, 1 \leq i <n}\left\{[p]_{n}|[p]_{n} \subset \fac(w), p \in \prim(w), |p|=i\right\},$$ and
$$\left\{C(p,n-1)|[p]_{n} \subset \fac(w), p \in \prim(w), |p|< n\right\}.$$ However, if there exists a circuit $C(p,n-1)$ in $\Gamma_{n-1}$ for some primitive $p$ with $|p| \mid m$ and $|p| \neq m$, there exists a circuit $C(p,n-2)$ in $\Gamma_{n-2}$. Thus, there exists the bijection stated as above.
\end{proof}

\begin{lem}
\label{length-3}
Let $w$ be an infinite word over a finite alphabet and let $n$ be a positive integer larger than $5$, then there exists a bijection between the sets:
$$\cup_{i|n, 1 \leq i <n}\left\{[p]_{n}|[p]_{n} \subset \fac(w), p \in \prim(w), |p|=i\right\};$$
$$\left\{C(p,n-3)|[p]_{n} \subset \fac(w), p \in \prim(w), |p|<n\right\}.$$
\end{lem}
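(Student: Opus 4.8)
The plan is to mimic the proof of Lemma~\ref{length-2} and reduce the statement about $\Gamma_{n-3}$ to the already established bijection with the circuits $C(p,n-1)$ coming from Lemma~\ref{bijection}. First I would recall from Lemma~\ref{bijection} that for $n$ large enough there is a bijection between $\cup_{i\mid n,\,1\le i<n}\{[p]_n \mid [p]_n \subset \fac(w),\ p\in\prim(w),\ |p|=i\}$ and the set of circuits $\{C(p,n-1) \mid [p]_n\subset\fac(w),\ p\in\prim(w),\ |p|<n\}$. The key observation is that if $p$ is primitive with $|p|\mid n$ and $|p|\neq n$, then since $|p|\le n/2$ we have $|p|\le n-3$ once $n\ge 6$; consequently the circuit $C(p,\ell)$ exists in $\Gamma_\ell(w)$ not only for $\ell=n-1$ but for every $\ell$ with $|p|-1\le \ell \le n-1$, in particular for $\ell = n-3$. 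Indeed, all of $[p]_{\ell}, [p]_{\ell+1}$ are factors of $w$ whenever $[p]_n$ is, because every conjugate of a power of $p$ of length $\le n$ occurs inside an occurrence of a conjugate of $p^{n/|p|}$; thus $C(p,n-3)$ is a well-defined quasi-small circuit in $\Gamma_{n-3}(w)$.

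The second step is to check that the correspondence $C(p,n-1) \mapsto C(p,n-3)$ is itself a bijection from the circuit set at level $n-1$ onto $\{C(p,n-3) \mid [p]_n\subset\fac(w),\ p\in\prim(w),\ |p|<n\}$. Injectivity follows from Lemma~\ref{size}: distinct conjugacy classes of primitive words of length $<n$ give distinct sets $[p]_{n-3}$ (using $n-3\ge |p|-1$, which holds since $|p|\le n/2 \le n-3$ for $n\ge 6$), hence distinct circuits $C(p,n-3)$; and by Lemma~\ref{bijection} each circuit $C(p,n-1)$ at level $n-1$ already corresponds to a unique conjugacy class of a primitive $p$ with $|p|<n$. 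Surjectivity is immediate by definition of the target set. Composing this bijection with the one from Lemma~\ref{bijection} yields the desired bijection.

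The main obstacle I anticipate is bookkeeping the inequalities that guarantee everything stays ``quasi-small'' and well-defined at level $n-3$: one must verify $|p|-1 \le n-3$ for every primitive $p$ with $|p|\mid n$, $|p|\neq n$, which forces the hypothesis $n\ge 6$ (so that $|p|\le n/2\le n-3$), exactly matching the statement; and one must confirm that $[p]_{n-2}$ and $[p]_{n-3}$ are indeed subsets of $\fac(w)$, i.e. that lowering the power does not take us outside the set of factors — this is where the condition $[p]_n\subset\fac(w)$ is used, since any conjugate of $p^{(n-2)/|p|}$ or $p^{(n-3)/|p|}$ is a factor of a suitable conjugate of $p^{n/|p|}$. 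Apart from these elementary length estimates, the argument is a routine descent parallel to Lemma~\ref{length-2}, just applied one Rauzy-graph level further.
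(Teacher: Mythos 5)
Your argument is correct and is essentially the paper's own: the paper proves this lemma by the one-line remark ``a similar argument as above,'' referring to the descent in Lemma~\ref{length-2} from the circuits $C(p,n-1)$ given by Lemma~\ref{bijection} down to circuits at a lower Rauzy-graph level. You carry out exactly that descent to level $n-3$, and you additionally make explicit the length bookkeeping $|p|\le n/2\le n-3$ for $n\ge 6$ that justifies the hypothesis $n>5$, which the paper leaves implicit.
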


\begin{proof}
A similar argument as above proves the bijection.
\end{proof}

\begin{lem}
\label{length-4}
Let $w$ be an infinite word over a finite alphabet and let $n$ be a positive integer, then there exists a bijection between the sets:
$$\left\{[p]_{n}|[p]_{n} \subset \fac(w), p \in \prim(w), |p|=n\right\};$$
$$\left\{C(p,n-1)|[p]_{n} \subset \fac(w), p \in \prim(w), |p|=n\right\}.$$
\end{lem}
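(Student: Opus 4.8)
The plan is to obtain this lemma as the restriction of Lemma~\ref{bijection} to the sub-collection of conjugacy classes whose period equals $n$ exactly. Recall that Lemma~\ref{bijection} produces a map $[p]_n \mapsto C(p,n-1)$ that is a bijection from $\{[p]_n \mid [p]_n \subset \fac(w),\ n \geq |p|,\ p \in \prim(w)\}$ onto the set of quasi-small circuits of $\Gamma_w(n-1)$ of the form $C(p,n-1)$, and that identifies $C(p,n-1)$ with $C(q,n-1)$ precisely when $p$ and $q$ are conjugate (it also checks, en route, that $[p]_{n-1}\subset\fac(w)$, so that the circuit genuinely lies in $\Gamma_w(n-1)$). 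So essentially all the work is already done; what remains is to verify that this bijection carries the left-hand set of the present lemma exactly onto the right-hand set.

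First I would observe that the condition $|p|=n$ is intrinsic to the class $[p]_n$ and to the circuit $C(p,n-1)$: by Lemma~\ref{size}, for $p$ primitive and $n \geq |p|-1$ the cardinality of $[p]_n$ equals $|p|$, so the defining condition on the left-hand set depends only on $[p]_n$, and likewise the size of $C(p,n-1)$ equals $|[p]_{n-1}|=|p|$ by the same lemma. Hence the map of Lemma~\ref{bijection}, restricted to those $[p]_n$ with $|p|=n$, has image exactly $\{C(p,n-1)\mid [p]_n \subset \fac(w),\ p\in\prim(w),\ |p|=n\}$: containment of the image in this set is immediate from the defining formula of the map, and conversely every such $C(p,n-1)$ is the image of $[p]_n$ with $|p|=n$. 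Injectivity on the restricted domain and surjectivity onto the restricted target are then inherited verbatim from the corresponding properties of the unrestricted bijection.

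I do not expect a genuine obstacle here; the only point requiring care is the bookkeeping that the period $|p|$ can be read simultaneously off the left-hand side as $|[p]_n|$ and off the right-hand side as the size of $C(p,n-1)$, these two quantities agreeing by Lemma~\ref{size}. For consistency with the terminology of Section~\ref{sec:rauzy} I would also remark that when $|p|=n$ the circuit $C(p,n-1)$ has size $n=(n-1)+1$, hence is not small but \emph{primitive}, so it is indeed among the quasi-small circuits counted in Theorem~\ref{bound}; this also explains why, in contrast with Lemmas~\ref{length-2} and~\ref{length-3}, one cannot push such a circuit down to a lower Rauzy graph, and the parameter stays at $n-1$.
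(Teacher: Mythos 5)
Your proposal is correct and matches the paper's approach: the paper simply states that the lemma is a corollary of Lemma~\ref{bijection}, and your argument is exactly the restriction of that bijection to the classes with $|p|=n$, with the (accurate) bookkeeping via Lemma~\ref{size} spelled out. The added remark that the resulting circuit has size $n$ and is therefore primitive rather than small is consistent with the paper's definitions and correctly explains why the parameter remains $n-1$.
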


\begin{proof}
It is a corollary of Lemma~\ref{bijection}.
\end{proof}

\begin{proof}[of Theorem~\ref{Lie:2}]
Let $w$ be an infinite word over a finite alphabet and let $n$ be a positive integer. If we let $CL_w(n)=\left\{[p]|[p] \subset \fac(w), |p|=n\right\}$, then
$$CL_w(n)=\cup_{i|n}\left\{[p]_{n}|[p]_{n} \subset \fac(w), p \in \prim(w), |p|=i\right\}.$$

Let us prove the theorem for $n=1$. In this case, we only need to prove that

$$L_w(1)+L_w(2)\leq 2|\Alphabet(w)| + p(2).$$
It is obvious because $L_w(1) =|\Alphabet(w)|$ and 
\begin{align*}
L_w(2)&=|\left\{[p]_{2}|[p]_{2} \subset \fac(w), p \in \prim(w), |p|=1\right\}|+|\left\{[p]_{2}|[p]_{2} \subset \fac(w), p \in \prim(w), |p|=2\right\}|\\
&\leq |\Alphabet(w)| + p(2).
\end{align*}

To prove this statement for $n \geq 2$, let us investigate the quasi-small cycles in $\Gamma_w(n-1)$. From Lemma~\ref{length-2} and ~\ref{length-4} there exist bijections:
$$F:\cup_{i|n+1, 1 \leq i <n+1}\left\{[p]_{n+1}|[p]_{n+1} \subset \fac(w), p \in \prim(w), |p|=i\right\} \to S_{n+1},$$
$$G:\cup_{i|n, 1 \leq i \leq n}\left\{[p]_{n}|[p]_{n} \subset \fac(w), p \in \prim(w), |p|=i\right\} \to S_{n};$$
with $S_{n+1}=\left\{C(p, n-1)| p \in \prim(w), |p| | n+1, |p| \neq n+1\right\}$ \\
and $ S_{n}=\left\{C(p, n-1)| p \in \prim(w), |p| | n\right\}$.
Knowing that all elements in $S_{n+1} \cup S_{n}$ are quasi-small cycles in $\Gamma_w(n-1)$. Combining with the following relations:
$$|S_{n+1} \cap S_{n}| \leq |\left\{C(p, n-1)| p \in \prim(w), |p|=1\right\}| \leq |\Alphabet(w)|,$$
$$L_w(n+1)=|CL_w(n+1)|=|\cup_{i|n, i \neq 1}\left\{[p]_{n}|[p]_{n} \subset \fac(w), p \in \prim(w), |p|=i\right\}|+p(n+1),$$
we conclude that 

\begin{align*}
L_w(n+1)+L_w(n)&=|S_{n+1}|+p(n+1)+|S_n|\\
&= |S_{n+1}\cup S_n|+p(n+1)+|S_{n+1}\cap S_n|\\
&\leq C_w(n)-C_w(n-1)+1+|\Alphabet(w)|+p(n+1).
\end{align*}

\end{proof}

\begin{proof}[of Theorem~\ref{Lie:3}]
With the same notation as above and with a $n$ larger than $5$. let us investigate the quasi-small cycles in $\Gamma_w(n-1)$. From Lemma~\ref{length-2},~\ref{length-3} and Lemma~\ref{length-4} there exist bijections:
$$F:\cup_{i|n+1, 1 \leq i <n+1}\left\{[p]_{n+1}|[p]_{n+1} \subset \fac(w), p \in \prim(w), |p|=i\right\} \to S_{n+1},$$
$$G:\cup_{i|n, 1 \leq i \leq n}\left\{[p]_{n}|[p]_{n} \subset \fac(w), p \in \prim(w), |p|=i\right\} \to S_{n};$$
$$H:\cup_{i|n+2, 1 \leq i <n+2}\left\{[p]_{n+2}|[p]_{n+2} \subset \fac(w), p \in \prim(w), |p|=i\right\} \to S_{n+2},$$

with $S_{n+1}=\left\{C(p, n-1)| p \in \prim(w), |p| | n+1, |p| \neq n+1\right\}$, \\
$ S_{n}=\left\{C(p, n-1)| p \in \prim(w), |p| | n\right\}$\\ 
and $S_{n+2}=\left\{C(p, n-1)| p \in \prim(w), |p| | n+2, |p| \neq n+2\right\}$.

Further, all elements in $S_{n+1} \cup S_{n} \cup S_{n+2}$ are quasi-small cycles in $\Gamma_w(n-1)$. With some analogue relations as above:
$$|S_{n+1} \cap S_{n}| \leq |\Alphabet(w)|, |S_{n+2} \cap S_{n+1}| \leq |\Alphabet(w)|,$$
$$|S_{n+2} \cap S_{n}| \leq |\left\{C(p, n-1)| p \in \prim(w), |p|=1,2\right\}| \leq |\Alphabet(w)|+|\Alphabet(w)|^2,$$
$$L_w(n+1)=|CL_w(n+1)|=|\cup_{i|n, i \neq n+1}\left\{[p]_{n}|[p]_{n} \subset \fac(w), p \in \prim(w), |p|=i\right\}|+p(n+1),$$
$$L_w(n+2)=|CL_w(n+2)|=|\cup_{i|n, i \neq n+2}\left\{[p]_{n}|[p]_{n} \subset \fac(w), p \in \prim(w), |p|=i\right\}|+p(n+2),$$
we conclude that 
\begin{align*}
&L_w(n+2)+L_w(n+1)+L_w(n)=|S_{n+2}|+p(n+2)+|S_{n+1}|+p(n+1)+|S_n|\\
&= |S_{n+2} \cup S_{n+1}\cup S_n|+|S_{n+1}\cap S_n|+|S_{n+2}\cap S_n|+|S_{n+2}\cap S_{n+1}|+p(n+1)+p(n+2)\\
&\leq C_w(n)-C_w(n-1)+1+3|\Alphabet(w)|+|\Alphabet(w)|^2+p(n+1)+p(n+2).
\end{align*}

\end{proof}

\section{Automaticity}
\label{sec:automatic}

A sequence $(s_n)_{n \in \mathbb{N}}$ is called $k$-automatic if there exists a finite automaton such that $s_n$ is the output of the this automaton when imputing
the base-$k$ representation of $n$. A sequence $(t_n)_{n \in \mathbb{N}}$ taking values in $\mathbb{Z}$ is $k$-regular if there is a row vector $v$, a column vector $w$, and a matrix-valued morphism $\zeta : \left\{0, 1, . . . , k-1\right\} \to \mathbb{Z}^{d \times d}$ such that $t_n = v\zeta(x)w$, where $x$ is the base-$k$ representation of $n$. From ~\cite{allouche_shallit_2003}[Thm. 16.1.5], a $k$-regular sequence is k-automatic if it takes only finitely
many distinct values. In this section, we prove the following result by using similar arguments given in~\cite{BellShallit}:

\begin{thm}
\label{auto}
Let $s$ be a $k$-automatic sequence, then the sequences $(eL_w(n))_{n \in \mathbb{N}}$ and $(\Delta pL_w(n))_{n \in \mathbb{N}}$ are both $k$-automatic. Moreover, $(pL_w(n))_{n \in \mathbb{N}}$ is $k$-regular.
\end{thm}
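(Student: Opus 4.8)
The plan is to follow the strategy of Bell and Shallit~\cite{BellShallit}: reduce each of the three complexity functions to a first-order statement in the logical theory of automatic sequences, so that the Büchi–Bruyère machinery (or, equivalently, the decision procedure implemented in \texttt{Walnut}) produces a finite automaton computing it. The starting observation is that for a $k$-automatic sequence $s$, the predicate ``$s[i..i+n-1] = s[j..j+n-1]$'' is expressible in the first-order logic $\mathrm{FO}(\mathbb{N},+,V_k)$ with free variables $i,j,n$; hence ``the factor of length $n$ at position $i$ also occurs at position $j$'' is expressible, and so is ``the factor at position $i$ is primitive'' (a factor $u$ of length $n$ is primitive iff it has no period $d$ with $d\mid n$, $d<n$, and having period $d$ is itself a length-comparison predicate). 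These are the building blocks.

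First I would handle $(eL_w(n))_{n\in\mathbb N}$. By definition $eL_w(n)$ counts the sets $[p]_n$ with $[p]_n\subset\fac(w)$, $p$ primitive, $|p|\mid n$. Using Lemma~\ref{bijection}, $eL_w(n)$ equals the number of quasi-small circuits $C(p,n-1)$ in $\Gamma_w(n-1)$. I would instead count directly: a conjugacy class contributing to $eL_w(n)$ is determined by a ``canonical'' representative — say, the lexicographically least position $i$ such that the length-$n$ word $p^{n/|p|}$ (for the primitive root $p$ of $w[i..i+n-1]$) occurs at $i$, $p$ primitive, $|p|\mid n$, and \emph{every} cyclic conjugate of that length-$n$ word occurs somewhere in $w$. ``Being the canonical representative'' is a first-order predicate $\psi(i,n)$: it asserts primitivity of the root, that $i$ is minimal among all starting positions of any cyclic shift of $w[i..i+n-1]$, and that for every cyclic shift there is an occurrence (the cyclic-shift relation ``$w[i..i+n-1]$ and $w[j..j+n-1]$ are conjugate'' is expressible because being conjugate for words of equal length means agreeing after a rotation, again a finite positional condition). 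Then $eL_w(n) = \#\{i : \psi(i,n)\}$, and by the standard counting theorem for automatic sequences this function of $n$ is $k$-regular; since $eL_w(n)\le C_w(n)-C_w(n-1)+1$ by Theorem~\ref{ELie} and the right-hand side is itself known to be bounded (the difference of factor complexities of an automatic sequence is bounded), $eL_w$ takes finitely many values, hence is $k$-automatic by~\cite{allouche_shallit_2003}[Thm.~16.1.5].

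Next, $(pL_w(n))_{n\in\mathbb N}$: here $pL_w(i)$ counts conjugacy classes all of whose elements occur in the prefix $w_i$. A class is counted at argument $i$ iff there is a position $j$ with $j+n\le i+1$ (so $w[j..j+n-1]$ sits inside $w_i$) that is the canonical representative of its class \emph{relative to occurrences inside $w_i$} — i.e. every cyclic shift of $w[j..j+n-1]$ occurs at some position $j'$ with $j'+n\le i+1$, and $j$ is minimal with this property. This is a first-order predicate $\chi(j,i)$, so $pL_w(i)=\#\{j:\chi(j,i)\}$ is $k$-regular. It need not take finitely many values (it is nondecreasing and unbounded in general), so we only get $k$-regularity here — which is exactly what the theorem claims. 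For $(\Delta pL_w(n))_{n\in\mathbb N}$, Corollary~\ref{cor} says it is a $0,1$-sequence; and $\Delta pL_w(n)=pL_w(n+1)-pL_w(n)$ is a difference of $k$-regular sequences, hence $k$-regular, and being bounded it is $k$-automatic. (Alternatively, $\Delta pL_w(n)=1$ iff there exists a class whose last-completed conjugate appears exactly at the step from $w_n$ to $w_{n+1}$, which one can write directly as a first-order predicate in $n$.)

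The main obstacle — really the only delicate point — is making the ``primitive root / canonical conjugate'' bookkeeping genuinely first-order and ensuring the counting is over a set whose membership does not secretly depend on unbounded data. Three things need care: (i) expressing primitivity of $w[i..i+n-1]$ uniformly in $n$ (done via ``no proper period dividing $n$'', where $d\mid n$ is available in $\mathrm{FO}(\mathbb N,+,V_k)$); (ii) phrasing ``$[p]_n\subset\fac(w)$'', i.e. every one of the $|p|$ cyclic shifts of the length-$n$ block occurs — this is a bounded conjunction over shift-amounts $0\le r<|p|$, but since $|p|$ varies we instead quantify ``for all $r$ with $0\le r<n$, the $r$-rotation of $w[i..i+n-1]$ occurs'' and observe the extra shifts beyond $|p|$ are redundant; and (iii) verifying that $C_w(n)-C_w(n-1)$ is bounded for automatic $w$, which is classical (the subword complexity of a $k$-automatic sequence satisfies $C_w(n)=O(n)$ and its difference is bounded) and is what upgrades $k$-regularity to $k$-automaticity for $eL_w$ and for $\Delta pL_w$. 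Once these predicates are in place the rest is the black-box translation to automata, exactly as in~\cite{BellShallit}.
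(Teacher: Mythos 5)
Your overall strategy (encode a canonical representative of each counted class as a first-order predicate over positions, count the accepting $i$'s to get $k$-regularity, then use boundedness to upgrade to $k$-automaticity) is exactly the paper's. However, your predicate for $eL_w$ counts the wrong quantity. You state that $eL_w(n)$ counts the classes $[p]_n$ with $p$ primitive and $|p|\mid n$, and you verify ``$[p]_n\subset\fac(w)$'' by asking that every cyclic rotation of the length-$n$ block $w[i..i+n-1]$ occur in $w$. But the definition only requires $|p|\le n$, and when $|p|\nmid n$ the set $[p]_n=\left\{u^{n/|u|}:u\in[p]\right\}$ consists of \emph{fractional} powers of the conjugates of $p$; these are not rotations of one another and do not form a conjugacy class. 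For instance, with $p=aab$ and $n=4$ one has $[p]_4=\{aaba,abaa,baab\}$, whereas the conjugacy class of $aaba$ also contains $baaa$ and $aaab$. With your restriction $|p|\mid n$ (every length-$n$ word being an exact power of a unique primitive root) your count collapses to the ordinary Lie complexity $L_w(n)$ --- compare the paper's table for $(aba)^\omega$, where $L_v(4)=0$ but $eL_v(4)=1$. The fix is what the paper does: existentially quantify over the root length $m$ with $1\le m\le n$, assert primitivity, lexicographic-leastness and first occurrence of $w[i..i+m-1]$, and assert that for \emph{every conjugate} $u$ of $w[i..i+m-1]$ some length-$n$ factor of $w$ admits $u$ as a period (the paper's allpower/perfac predicates).

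Two smaller points. First, ``$d\mid n$'' for two free variables is not expressible in $\mathrm{FO}(\mathbb{N},+,V_k)$, so primitivity cannot be phrased as ``no period $d$ with $d\mid n$''; it must be phrased, as in the paper, as the absence of a nontrivial decomposition $xy=yx$ of the block. Second, your treatment of $pL_w$ and $\Delta pL_w$ matches the paper's (counting, per prefix length, a position-minimal representative of each class all of whose conjugates occur in the prefix, then using Corollary~\ref{cor} plus closure of $k$-regular sequences under shift and difference), and is fine once the class length is made an explicitly quantified variable of the predicate; the boundedness of $C_w(n)-C_w(n-1)$ for automatic sequences that you invoke to conclude automaticity of $eL_w$ is the same ingredient the paper uses.
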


The proof of Theorem~\ref{auto} is based on the following result~\cite{10.1007/978-3-642-22321-1_15}:
\begin{thm}
Let $s$ be a $k$-automatic sequence.\\
(a) There is an algorithm that, given a well-formed first-order logical formula $\phi$
in $FO(\mathbb{N}, +, 0, 1, n \to s[n])$ having no free variables, decides if $\phi$ is true or false.\\
(b) Furthermore, if $\phi$ has free variables, then the algorithm constructs an automaton recognizing the representation of the values of those variables for
which $\phi$ evaluates to true.
\end{thm}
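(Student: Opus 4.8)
The plan is to prove this by translating each first-order formula into a finite automaton over a suitable alphabet, exploiting the classical equivalence---due to B\"uchi, Bruy\`ere, Hansel, Michaux and Villemaire---between $k$-recognizable relations on $\N$ and relations definable in the first-order structure extended by the base-$k$ valuation. The core idea is that the class of finite automata is closed under exactly those operations that correspond to the logical connectives and quantifiers, so that a structural induction on $\phi$ produces, uniformly and algorithmically, an automaton recognizing the base-$k$ encodings of the tuples that satisfy $\phi$.

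First I would fix the encoding. Each variable ranges over $\N$, written in base $k$, and an $m$-tuple $(n_1,\dots,n_m)$ is encoded as a single word over the alphabet $\{0,\dots,k-1\}^m$ obtained by padding the representations to a common length with leading zeros and reading the $m$ digit-columns synchronously. For the base case I would exhibit automata for the atomic predicates: equality and the graph of addition $x+y=z$ are recognized by standard finite automata that scan the digit-columns (addition storing a bounded carry in its state), the constants $0$ and $1$ are trivial, and---crucially---the predicate $s[n]=a$ for each letter $a$ of the alphabet of $s$ is recognized by the automaton witnessing that $s$ is $k$-automatic, reading the digits of $n$ and accepting exactly when the reached output state is labelled $a$. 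Throughout, the automata must be made insensitive to leading zeros, so that each number has a well-defined accepting behaviour independent of the chosen padding.

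For the inductive step I would handle the connectives by the standard automaton constructions: conjunction by the product automaton, disjunction by union, and negation by complementation (determinize, then swap accepting and non-accepting states), each of which preserves recognizability. An existential quantifier $\exists x_i\,\psi$ is realized by projection: delete the $i$-th digit-column from the alphabet of the automaton for $\psi$, obtaining a nondeterministic automaton over the smaller alphabet, then determinize; the universal quantifier follows from $\forall x_i = \neg\,\exists x_i\,\neg$. The delicate point, and the step I expect to be the main obstacle, is the interaction of projection with the leading-zero and padding conventions: when a coordinate is eliminated, the common length of the surviving coordinates must be reconciled with the possibly longer representation of the witnessing value, so one has to insert or absorb padding carefully to guarantee that the projected automaton accepts a tuple exactly when some witness exists. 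Verifying that this bookkeeping is sound---so that recognizability is genuinely preserved across every quantifier---is the heart of the argument.

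Finally I would conclude both parts simultaneously. Applying the induction to $\phi$ yields a finite automaton whose alphabet has one digit-column per free variable; this is exactly the automaton required in part (b), recognizing the base-$k$ representations of the satisfying tuples. When $\phi$ has no free variables the alphabet is empty, so the automaton either accepts or rejects the single empty input word; equivalently, one tests whether the recognized language is nonempty, which is decidable for finite automata by a reachability check on the state graph. Since every construction above is effective and terminates, the whole procedure is an algorithm, which proves part (a).
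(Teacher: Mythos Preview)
Your outline is a correct sketch of the classical B\"uchi--Bruy\`ere--Hansel--Michaux--Villemaire argument, and the inductive automata constructions you describe (product, complementation, projection, with care about leading-zero padding) are exactly the right ingredients. There is nothing mathematically wrong with what you wrote.

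However, there is nothing to compare it to: the paper does not prove this theorem at all. The statement is quoted verbatim as a known result from the literature (the reference to Charlier--Rampersad--Shallit, \emph{Enumeration and decidable properties of automatic sequences}), and the paper simply invokes it as a black box in the proof of Theorem~\ref{auto}. So your proposal supplies a proof where the paper intentionally gives none; relative to the paper's own treatment, the ``approach'' is just a citation.
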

Further, from~\cite{10.1007/978-3-642-22321-1_15}, if $A$ is an automaton accepting the base-$k$ representation of pairs $(i, n)$ in parallel, then the sequence $a_n = \#\left\{i : \text{ $A$ accepts $(i, n)$ }\right\}$ is $k$-regular. 

\begin{proof}[of Theorem~\ref{auto}] We first show that the sequences $(eL_w(n))_{n \in \mathbb{N}}$ and $(pL_w(n))_{n \in \mathbb{N}}$ are both $k$-regular. 
Let us construct a first-order logical formula e-lie$(i, n)$ for the pairs $(i, n)$ as follows:\\
(a) There exists an integer $m$ satisfying $1 \leq m \leq n$;\\
(b) the word $s=w[i..i+m-1]$ is primitive and lexicographically least in its conjugacy class;\\
(c) $[s]_n \in \fac(w)$;\\
(d) the word $w[i..i+m-1]$ is the earliest occurrence of $s$ in $w$.\\
Then the number of $i$ making e-lie$(i, n)$ true equals $eL_w(n)$.\\

Similarly, let us construct a first-order logical formula p-lie$(i, n)$ for the pairs $(i, n)$ as follows:\\
(a) There exists an integer $m$ satisfying $1 \leq m \leq n$;\\
(b) the word $s=w[i..i+m-1]$ is lexicographically least in its conjugacy class;\\
(c) $[s] \in \fac(w[1..n])$;\\
(d) the word $w[i..i+m-1]$ is the earliest occurrence of $s$ in $w$.\\
Then the number of $i$ making p-lie$(i, n)$ true equals $pL_w(n)$.\\

We construct these formulas in a number of steps:\\
factoreq$(i, j, n)$ asserts that the length-$n$ factor $w[i..i+n-1]$ equals $w[j..j+
n-1]$.\\
prim$(i, n)$ asserts that the length-$n$ factor $w[i..i+n-1]$ is primitive.\\
power$(i, m,n)$ asserts that the length-$n$ factor $w[i..i+n-1]$ is of the period $w[i..i+m-1]$, i.e. $w[i..i+n-1]=(w[i..i+m-1])^{\frac{n}{m}}$.\\
perfac$(k,j,m,n)$ asserts that $w[j..j+m-1]$ is a period of $w[k..k+n-1]$.\\
shift$(i, j, n, t)$ asserts that $w[i..i + n-1]$ is the shift, by $t$ positions, of the
factor $w[j..j + n-1]$, i.e. there are two words $a$,$b$ such that $|a|=n-t$, $|b|=t$, $w[j..j + n-1]=ab$ and $w[i..i + n-1]=ba$.\\
conj$(i, j, n)$ asserts that the factor $w[i..i + n-1]$ is a conjugate of $w[j..j +
n-1]$.\\
lessthan$(i, j, n)$ asserts that the factor $w[i..i + n-1]$ is lexicographically
smaller than $w[j..j + n-1]$.\\
lessthaneq$(i, j, n)$ asserts that the factor $w[i..i + n - 1]$ is lexicographically
smaller than or equal to the factor $w[j..j + n-1]$.\\
allconj$(i, n)$ asserts that all conjugates of $w[i..i + n- 1]$ appear as factors of
$w$.\\
allconjpref$(i,m, n)$ asserts that all conjugates of $w[i..i + m- 1]$ appear as factors of
$w[1..n]$.\\
allpower$(i,m,n)$ asserts that for all elements $s$ in $[w[i..i + m- 1]]$, $s^{\frac{n}{m}}$ appear as factors of
$w$.\\
lexleast$(i, n)$ asserts that $w[i..i + n - 1]$ is lexicographically least among all
its conjugates that actually appear in $w$.\\
e-lie$(i, n)$ asserts that there exists an integer $m$ such that $1 \leq m \leq n$, that $w[i..i + m - 1]$ is primitive, that all elements in $[w[i..i + m - 1]]_n$ appear in $w$, that
$w[i..i+m-1]$ is the lexicographically least in its conjugacy class and that $w[i..i+m-1]$
is its first occurrence in $w$.\\
p-lie$(i, n)$ asserts that there exists an integer $m$ such that $1 \leq m \leq n$, that all elements in $[w[i..i + m - 1]]$ appear in $w[1..n]$, that
$w[i..i+m-1]$ is the lexicographically least in its conjugacy class and that $w[i..i+m-1]$
is its first occurrence in $w[1..n]$.

Here are the definitions of the formulas. Recall that the domain of all variables
is $\mathbb{N} = \left\{0, 1, \cdots\right\}$.\\
factoreq$(i, j, n) := \forall u, v (i + v = j + u \land u \geq i \land u < i + n) \implies w[u] = w[v]$\\
prim$(i, m):=\lnot(\exists j (j>0 \land j<m \land \text{factoreq}(i,i+j,m-j) \land \text{factoreq}(i,(i+m)-j,j))$\\
power$(i, m,n):= m\geq 1 \land m \leq n \land (\forall t (t+m<n) \implies w[i+t]=w[i+t+m])$\\
perfac$(k,j,m,n):=m\geq 1 \land m \leq n \land \text{power}(k,m,n) \land \text{factoreq}(j,k,m)$\\
shift$(i, j, n, t) := \text{factoreq}(j, i + t, n - t) \land \text{factoreq}(i,(j + n) - t, t)$\\
conj$(i, j, n) := \exists t (t \leq n) \land \text{shift}(i, j, n, t)$\\
lessthan$(i, j, n) :=\exists t (t < n) \land \text{factoreq}(i, j, t) \land w[i + t] < w[j + t]$\\
lessthaneq$(i, j, n) := \text{lessthan}(i, j, n) \lor \text{factoreq}(i, j, n)$\\
allconj $(i, n) := \forall t(t \leq n) \implies \exists j \text{shift}(i, j, n, t)$\\
allconjpref $(i,m, n) := \forall t(t \leq m) \implies \exists j(j+m \leq n) \land \text{shift}(i, j, m, t)$\\
allpower$(i,m,n):=\forall j \text{conj}(i,j,m) \implies \exists k \text{perfac}(j,k,m,n)$\\
lexleast $(i, n) := \forall j conj(i, j, n) \implies lessthaneq(i, j, n)$\\
e-lie$(i, n) := \exists m (1\leq m \land m\leq n ) \land \text{prim}(i,m) \land \text{allpower}(i, m,n) \land \text{lexleast}(i, m) \land (\forall j \text{factoreq}(i, j, m) \implies (j \geq i))$\\
p-lie$(i,n):= \exists m (1\leq m \land m\leq n ) \land \text{allconjpref}(i,m,n) \land \text{lexleast}(i,m) \land (\forall j \text{factoreq}(i,j,m) \implies j\geq i)$\\

From the remarks preceding the proof, we prove that $(eL_w(n))_{n \in \mathbb{N}}$ and $(pL_w(n))_{n \in \mathbb{N}}$ are both $k$-regular. Since the factor complexity of automatic sequences are linear~\cite{allouche_shallit_2003}[Thm. 10.3.1], and, from Theorem~\ref{ELie}, $(eL_w(n))_{n \in \mathbb{N}}$ is bounded, the sequence $(eL_w(n))_{n \in \mathbb{N}}$ is hence automatic. Further, as $(pL_w(n))_{n \in \mathbb{N}}$ is $k$-regular, from~\cite{allouche_shallit_2003}[Thm. 16.2.2], $(pL_w(n+1))_{n \in \mathbb{N}}$ is $k$-regular. Moreover, from~\cite{allouche_shallit_2003}[Thm. 16.2.1], $(\Delta pL_w(n))_{n \in \mathbb{N}}$ is $k$-regular. From Corollary~\ref{cor}, $(\Delta pL_w(n))_{n \in \mathbb{N}}$ is bounded, thus, it is $k$-automatic.\qed
\end{proof}

\begin{exa}
Let $t$ be the Thue-Morse word, the fixed point of the morphism $\mu$
sending $0$ to $01$ and $1$ to $10$.
Using the free software Walnut~\cite{mousavi2016automatic}, we can implement the algorithm in the
previous proof to find automata generating $(eL_t(n))_{n \in \mathbb{N}}$ and 
$(\Delta pL_t(n))_{n \in \mathbb{N}}$.
The sequence $(eL_t(n))_{n \in \mathbb{N}}$ is generated by the following automaton:
\begin{center}
\begin{tikzpicture}[scale=0.2]
\tikzstyle{every node}+=[inner sep=0pt]
\draw [black] (8.5,-3.2) circle (3);
\draw (8.5,-3.2) node {$1$};
\draw [black] (25.2,-3.2) circle (3);
\draw (25.2,-3.2) node {$2$};
\draw [black] (25.2,-16.3) circle (3);
\draw (25.2,-16.3) node {$3$};
\draw [black] (45.4,-3.2) circle (3);
\draw (45.4,-3.2) node {$3$};
\draw [black] (25.2,-30.4) circle (3);
\draw (25.2,-30.4) node {$4$};
\draw [black] (45.4,-29.1) circle (3);
\draw (45.4,-29.1) node {$1$};
\draw [black] (45.4,-16.3) circle (3);
\draw (45.4,-16.3) node {$3$};
\draw [black] (55.1,-16.3) circle (3);
\draw (55.1,-16.3) node {$0$};
\draw [black] (5.82,-4.523) arc (324:36:2.25);
\draw (1.25,-3.2) node [left] {$0$};
\fill [black] (5.82,-1.88) -- (5.47,-1) -- (4.88,-1.81);
\draw [black] (11.5,-3.2) -- (22.2,-3.2);
\fill [black] (22.2,-3.2) -- (21.4,-2.7) -- (21.4,-3.7);
\draw (16.85,-3.7) node [below] {$1$};
\draw [black] (25.2,-6.2) -- (25.2,-13.3);
\fill [black] (25.2,-13.3) -- (25.7,-12.5) -- (24.7,-12.5);
\draw (24.7,-9.75) node [left] {$0$};
\draw [black] (28.2,-3.2) -- (42.4,-3.2);
\fill [black] (42.4,-3.2) -- (41.6,-2.7) -- (41.6,-3.7);
\draw (35.3,-3.7) node [below] {$1$};
\draw [black] (25.2,-19.3) -- (25.2,-27.4);
\fill [black] (25.2,-27.4) -- (25.7,-26.6) -- (24.7,-26.6);
\draw (24.7,-23.35) node [left] {$0$};
\draw [black] (28.002,-15.246) arc (104.44491:10.83322:13.928);
\fill [black] (45.16,-26.12) -- (45.5,-25.24) -- (44.52,-25.42);
\draw (39.93,-16.47) node [above] {$1$};
\draw [black] (48.08,-1.877) arc (144:-144:2.25);
\draw (52.65,-3.2) node [right] {$0$};
\fill [black] (48.08,-4.52) -- (48.43,-5.4) -- (49.02,-4.59);
\draw [black] (45.4,-6.2) -- (45.4,-13.3);
\fill [black] (45.4,-13.3) -- (45.9,-12.5) -- (44.9,-12.5);
\draw (44.9,-9.75) node [left] {$1$};
\draw [black] (43.591,-31.484) arc (-44.24958:-128.38587:12.183);
\fill [black] (43.59,-31.48) -- (42.67,-31.71) -- (43.39,-32.41);
\draw (35.73,-35.7) node [below] {$0$};
\draw [black] (27.181,-28.156) arc (131.91506:55.44949:12.927);
\fill [black] (43.15,-27.13) -- (42.77,-26.26) -- (42.21,-27.09);
\draw (34.9,-24.32) node [above] {$1$};
\draw [black] (48.08,-27.777) arc (144:-144:2.25);
\draw (52.65,-29.1) node [right] {$0,1$};
\fill [black] (48.08,-30.42) -- (48.43,-31.3) -- (49.02,-30.49);
\draw [black] (48.037,-14.909) arc (106.67691:73.32309:7.713);
\fill [black] (52.46,-14.91) -- (51.84,-14.2) -- (51.55,-15.16);
\draw (50.25,-14.08) node [above] {$1$};
\draw [black] (52.75,-18.118) arc (-66.16759:-113.83241:6.188);
\fill [black] (52.75,-18.12) -- (51.82,-17.98) -- (52.22,-18.9);
\draw (50.25,-19.15) node [below] {$0$};
\draw [black] (56.423,-18.98) arc (54:-234:2.25);
\draw (55.1,-23.55) node [below] {$0,1$};
\fill [black] (53.78,-18.98) -- (52.9,-19.33) -- (53.71,-19.92);
\draw [black] (8.5,-11.7) -- (8.5,-6.2);
\draw (8.5,-12.2) node [below] {$start\mbox{ }state$};
\fill [black] (8.5,-6.2) -- (8,-7) -- (9,-7);
\end{tikzpicture}
\end{center}

Similarly, we obtain the following automaton generating $(\Delta pL_t(n))_{n \in \mathbb{N}}$:

\begin{center}
\begin{tikzpicture}[scale=0.2]
\tikzstyle{every node}+=[inner sep=0pt]
\draw [black] (8.5,-3.2) circle (3);
\draw (8.5,-3.2) node {$1$};
\draw [black] (24.1,-3.2) circle (3);
\draw (24.1,-3.2) node {$1$};
\draw [black] (24.1,-14.9) circle (3);
\draw (24.1,-14.9) node {$1$};
\draw [black] (41.5,-3.2) circle (3);
\draw (41.5,-3.2) node {$1$};
\draw [black] (24.1,-28.4) circle (3);
\draw (24.1,-28.4) node {$1$};
\draw [black] (58.5,-28.4) circle (3);
\draw (58.5,-28.4) node {$0$};
\draw [black] (58.5,-42.9) circle (3);
\draw (58.5,-42.9) node {$1$};
\draw [black] (58.5,-14.9) circle (3);
\draw (58.5,-14.9) node {$1$};
\draw [black] (24.1,-42.9) circle (3);
\draw (24.1,-42.9) node {$0$};
\draw [black] (5.82,-4.523) arc (324:36:2.25);
\draw (1.25,-3.2) node [left] {$0$};
\fill [black] (5.82,-1.88) -- (5.47,-1) -- (4.88,-1.81);
\draw [black] (11.5,-3.2) -- (21.1,-3.2);
\fill [black] (21.1,-3.2) -- (20.3,-2.7) -- (20.3,-3.7);
\draw (16.3,-3.7) node [below] {$1$};
\draw [black] (24.1,-6.2) -- (24.1,-11.9);
\fill [black] (24.1,-11.9) -- (24.6,-11.1) -- (23.6,-11.1);
\draw (23.6,-9.05) node [left] {$0$};
\draw [black] (27.1,-3.2) -- (38.5,-3.2);
\fill [black] (38.5,-3.2) -- (37.7,-2.7) -- (37.7,-3.7);
\draw (32.8,-3.7) node [below] {$1$};
\draw [black] (24.1,-17.9) -- (24.1,-25.4);
\fill [black] (24.1,-25.4) -- (24.6,-24.6) -- (23.6,-24.6);
\draw (23.6,-21.65) node [left] {$0$};
\draw [black] (55.529,-27.982) arc (-99.17261:-123.68163:73.305);
\fill [black] (55.53,-27.98) -- (54.82,-27.36) -- (54.66,-28.35);
\draw (39.48,-24.37) node [below] {$1$};
\draw [black] (44.465,-2.757) arc (94.58563:-48.22329:21.981);
\fill [black] (60.87,-41.06) -- (61.8,-40.9) -- (61.13,-40.15);
\draw (67.16,-15.06) node [right] {$1$};
\draw [black] (61.18,-27.077) arc (144:-144:2.25);
\draw (65.75,-28.4) node [right] {$1$};
\fill [black] (61.18,-29.72) -- (61.53,-30.6) -- (62.12,-29.79);
\draw [black] (8.5,-11.7) -- (8.5,-6.2);
\draw (8.5,-12.2) node [below] {$start\mbox{ }state$};
\fill [black] (8.5,-6.2) -- (8,-7) -- (9,-7);
\draw [black] (43.97,-4.9) -- (56.03,-13.2);
\fill [black] (56.03,-13.2) -- (55.65,-12.33) -- (55.09,-13.16);
\draw (49,-9.55) node [below] {$0$};
\draw [black] (24.1,-31.4) -- (24.1,-39.9);
\fill [black] (24.1,-39.9) -- (24.6,-39.1) -- (23.6,-39.1);
\draw (24.6,-35.65) node [right] {$0,1$};
\draw [black] (55.5,-28.4) -- (27.1,-28.4);
\fill [black] (27.1,-28.4) -- (27.9,-28.9) -- (27.9,-27.9);
\draw (41.3,-27.9) node [above] {$0$};
\draw [black] (58.5,-17.9) -- (58.5,-25.4);
\fill [black] (58.5,-25.4) -- (59,-24.6) -- (58,-24.6);
\draw (58,-21.65) node [left] {$0$};
\draw [black] (26.458,-26.546) arc (126.72348:96.13076:59.151);
\fill [black] (26.46,-26.55) -- (27.4,-26.47) -- (26.8,-25.67);
\draw (39.27,-18.38) node [above] {$1$};
\draw [black] (58.5,-39.9) -- (58.5,-31.4);
\fill [black] (58.5,-31.4) -- (58,-32.2) -- (59,-32.2);
\draw (59,-35.65) node [right] {$0$};
\draw [black] (55.5,-42.9) -- (27.1,-42.9);
\fill [black] (27.1,-42.9) -- (27.9,-43.4) -- (27.9,-42.4);
\draw (41.3,-42.4) node [above] {$1$};
\draw [black] (21.42,-44.223) arc (-36:-324:2.25);
\draw (16.85,-42.9) node [left] {$0,1$};
\fill [black] (21.42,-41.58) -- (21.07,-40.7) -- (20.48,-41.51);
\end{tikzpicture}
\end{center}

\end{exa}

\section{Acknowledgement}

The author would like to thank Jason Bell and Jeffrey Shallit for a helpful discussion. The example proposed in Section Automaticity was provided by Jeffrey Shallit using Walnut. The author remarks that in a recent paper~\cite{aless2022lie}, Alessandro De Luca and Gabriele Fici reproved Theorem~\ref{Lie:1} by using Rauzy graphs. However, our approaches are merely different.


\bibliographystyle{splncs03}
\bibliography{biblio}
\end{document}